\newtheorem{theorem}{Theorem}
\newtheorem{lemma}{Lemma}
\newtheorem{conjecture}{Conjecture}
\newtheorem{corollary}{Corollary}
\theoremstyle{definition}
\newtheorem{definition}{Definition}
\theoremstyle{remark}
\newtheorem*{remark}{Remark}
\begin{document}


\begin{center}
\LARGE
Chomp on Graphs and Subsets\\
\vspace{0.2in}
\normalsize
Tirasan Khandhawit and Lynnelle Ye\\
\end{center}

\begin{abstract}
The game subset take-away begins with a simplicial complex $\bigtriangleup$. Two players take turns removing any element of $\bigtriangleup$ as well as all other elements which contain it, and the last player able to move wins. Graph Chomp is a special case of subset take-away played on a simplicial complex with only vertices and edges. The game has previously only been analyzed for complete graphs, forest graphs, and very small special cases of higher-dimensional simplicial complexes. We generalize a common method of reducing some game positions to simpler ones by symmetry and provide a complete analysis of complete $n$-partite graphs for arbitrary $n$ and all bipartite graphs. Finally, we give partial results for odd-cycle pseudotrees, which are non-bipartite graphs with a single cycle.
\end{abstract}

\section{Introduction}
Combinatorial game theory is the study of games between two or more players in which every player has complete information about the state of the game while making a move. There is no element of chance, and players move sequentially rather than simultaneously. Chess and Go are classic examples of games of this type. Combinatorial game theory has numerous applications to, for example, computer algorithms and artificial intelligence.

We deal only with two-player games in this paper. Assuming two perfect players, the full information requirement means that given any position in such a game, at least one player has a deterministic strategy that will result in a draw or a win. This is true because if one player has no such strategy, then for every possible move, the other player must have a winning response. Therefore, the other player must have a deterministic winning strategy. 

If we further require the game to always end in finite time with a winner and a loser, then the winning player can theoretically be determined for any given position. Finally, if we require the game to be impartial, so that both players have the same options when faced with the same position, then each position can be categorized as a first-player win or a second-player win. (Note that chess actually satisfies neither of these conditions, since ties and infinite games are possible and White and Black have different moves given the same position.)

In practice, even combinatorial games much simpler than Chess or Go can be extremely difficult to analyze. Two such games are subset take-away and graph chomp. Subset take-away can be played on any collection of subsets of an $n$-element set $V$ satisfying the properties of a simplicial complex, described below. The two players take turns removing a set in the collection and all of its supersets, and the player who removes the last set wins. Graph chomp is played on a graph, a set of vertices with edges between some pairs of vertices. Each turn consists of removing either an edge or a vertex and all edges connected to it, and again the last player to move wins.

Graphs and their generalization to simplicial complexes appear in many fields of computer science, such as neural networks. The bipartite graphs whose corresponding game positions we solve completely below are especially prominent in engineering applications. Understanding simple games played on graphs may also aid in understanding harder games such as the above-mentioned Chess and Go.

Both games are far from being understood for arbitrary positions. Gale and Neyman~\cite{Gale} solved subset take-away for most simplicial complexes containing all subsets of size at most $k$ (for some $k<n$) with $n$ at most $7$, as well as for special cases with arbitrarily large $n$. Christensen and Tilford~\cite{Christensen} provided further analysis of the case when $k=n-1$. Draisma and van Rijnswou~\cite{Draisma}  analyzed graph chomp played on forests as well as on complete graphs. Subset take-away and graph chomp are members of the poset game family, with connections to the famously unsolved game chocolate bar chomp; see Brouwer's website~\cite{Brouwer} for more information on progress in the theory of poset games.

Here we provide a new method for reducing some positions in subset take-away that applies more generally than methods used in \cite{Christensen} and \cite{Draisma}. We then analyze several new classes of graphs, including complete $n$-partite graphs, general bipartite graphs, and pseudotrees.

In Section~\ref{prelim}, we provide preliminaries. In Section~\ref{invo}, we prove our general method for reducing game positions and apply it to the case of a complete $n$-partite graph. In Section~\ref{bipart}, we develop a theorem to give the nim-value of all bipartite graphs. In Section~\ref{grah}, we give partial results for pseudotrees with odd cycles. In Section~\ref{fork}, we summarize and suggest directions for future work. Finally, we provide conjectures about some non-bipartite graphs in Appendix A and additional partial results in Appendix B.

\section{Preliminaries}
\label{prelim}

A \emph{partially ordered set}, or \emph{poset}, is a set $X$ together with a relationship between some pairs of elements of $X$, denoted $<$. For all $a,b,c\in X$ the relationship satisfies the properties
\begin{enumerate}
\item $a\not< a$,
\item $a\not< b$ whenever $b<a$, and
\item $a<c$ whenever $a<b$ and $b<c$. 
\end{enumerate}
A poset game is any impartial, two-person game played on a poset $(X,<)$ in which the players take turns choosing an element $k$ of $X$ and removing all elements $l$ of $X$ with $k\le l$. As is normal for combinatorial games, the player who removes the last remaining element wins. Subset take-away and graph chomp are both poset games.

Subset take-away can be played on any simplicial complex. A \emph{simplicial complex} is a collection $\bigtriangleup$ of subsets of a finite set $V$ satisfying the property that for all subsets $\sigma$ in $\bigtriangleup$, if $\varphi$ is a subset of $V$ contained in $\sigma$ then $\varphi$ is also in $\bigtriangleup$. We say that $\sigma>\varphi$ if and only if $\sigma$ is a strict superset of $\varphi$. 

So, the two players take turns choosing any $\varphi$ from $\bigtriangleup$ and removing it as well as all of its supersets. A simplicial complex may be interpreted geometrically by treating $1$-sets as vertices, $2$-sets as edges connecting the vertices they contain, $3$-sets as faces bounded by the vertices and edges they contain, and so on. Figure~\ref{3simplex} provides an example where $V=\{1,2,3\}$ and $\bigtriangleup=\{\{1\},\{2\},\{3\},\{1,2\},\{1,3\},\{2,3\},\{1,2,3\}\}$. Based on this interpretation, we refer to $1$-sets as vertices and $2$-sets as edges throughout this paper.

\begin{figure}[htbp]
\begin{center}
\includegraphics[scale=0.7]{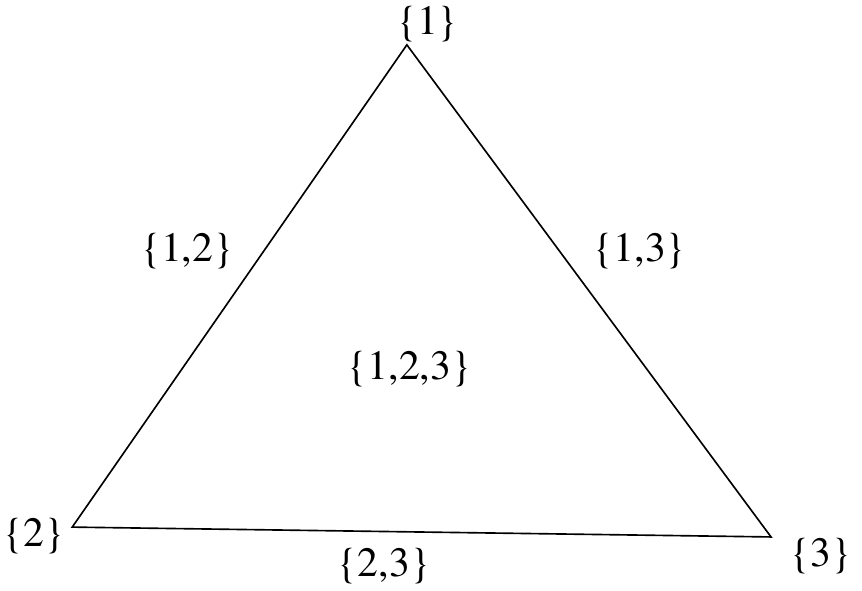}
\end{center}
\caption{Geometric interpretation of simplicial complex of all nonempty subsets of $\{1,2,3\}$.}
\label{3simplex}
\end{figure}

Graph chomp is just a special case of subset take-away where no subset in the simplicial complex has more than two elements. Both possible moves from $K_3$, the complete graph on three vertices, are shown in Figure~\ref{gchompex}.

\begin{figure}[htbp]
\begin{center}
\includegraphics[scale=0.7]{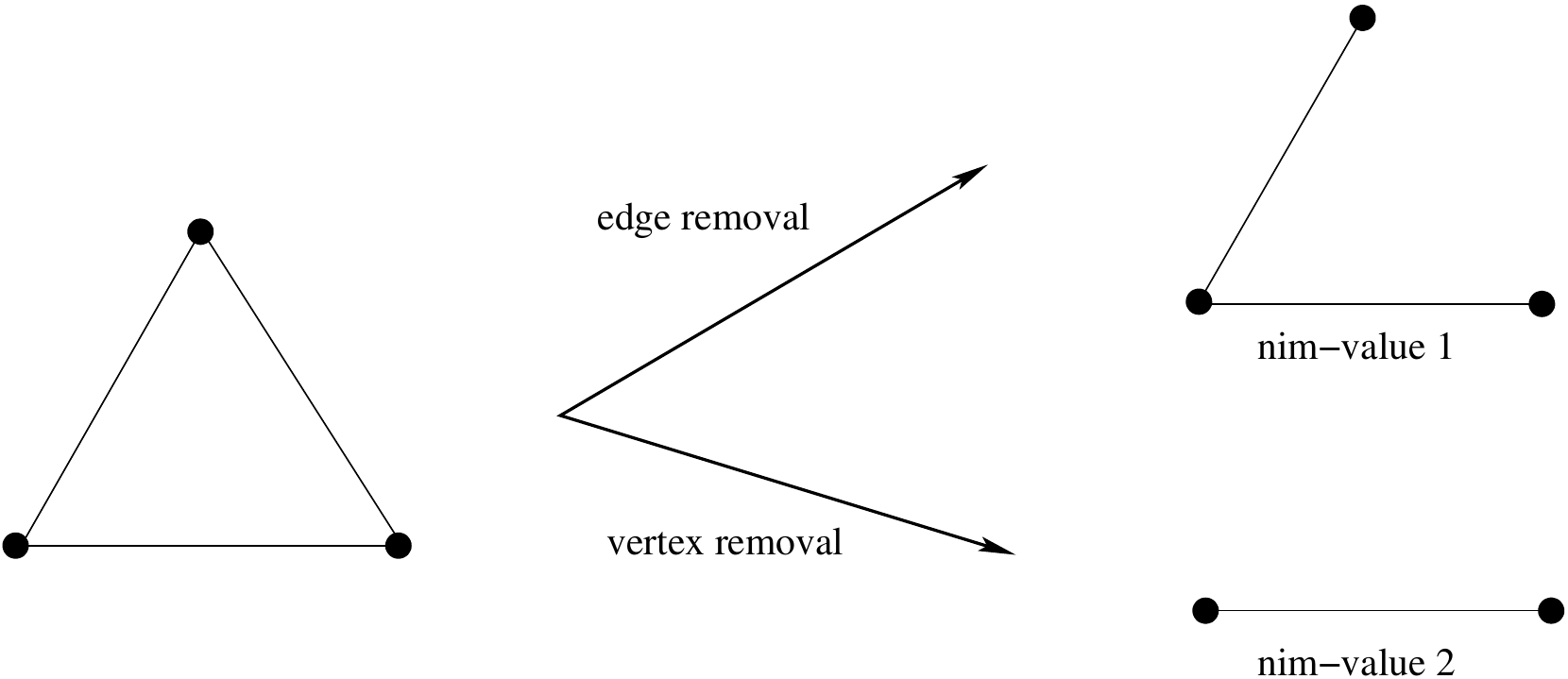}
\end{center}
\caption{Possible moves from $K_3$.}
\label{gchompex}
\end{figure}

Since only the relationships between vertices affect the game while the labels on the vertices are irrelevant, we wish to denote when two simplicial complexes are essentially the same, that is, isomorphic.

\begin{definition}
We say that simplicial complex $\bigtriangleup_1$ is \emph{isomorphic} to simplicial complex $\bigtriangleup_2$ if $\bigtriangleup_1$ can be transformed into $\bigtriangleup_2$ by a relabling of the vertices.
\end{definition}

To analyze any finite, impartial two-player combinatorial game, it is helpful to understand the model game Nim. Nim is played with several heaps of counters. The players take turns removing any positive number of counters from a single heap, and the player who takes the last counter(s) wins. Sprague~\cite{Sprague} and Grundy~\cite{Grundy} proved that any position in any finite, impartial two-player combinatorial game is equivalent to a game of Nim with one heap containing $n$ counters; the value of $n$ is called the position's nim-value. This is known as the Sprague-Grundy theorem.

A player faced with a position with nim-value 0 will lose the game, just as a player faced with an empty heap in Nim has lost. A position with nim-value 0 is also called a P-position, since the \emph{p}revious player wins. A player faced with a position with positive nim-value will win the game, just as a player faced with a single non-empty heap in Nim wins by taking all remaining counters. A position with positive nim-value is also called an N-position, since the \emph{n}ext player wins. 

The nim-value of a game is calculated as follows. The minimal nonnegative integer not included in a set of nonnegative integers is known as the \emph{m}inimal \emph{ex}cluded value, or \emph{mex}, of the set. For example, mex$\{0,1,5,8\}=2$ and mex$\{1,2,3,4\}=0$. The nim-value $g(\Gamma)$ of an arbitrary position $\Gamma$ in an arbitrary game is defined as the mex of the nim-values the next player can move to from $\Gamma$. This is known as the \emph{mex rule}. 

The mex rule implies that among the moves from a position with positive nim-value, there must be one which goes to a position with nim-value 0; on the other hand, among the moves from a position with nim-value 0, there can be none which go to another position with nim-value 0. Translating, we see that it is always possible to move from an N-position to a P-position and never possible to move from a P-position to another P-position. Therefore, determining P-positions and N-positions for a given game suffices to provide a complete strategy for the winning player, since a player faced with an N-position can always hand the opponent a P-position and the opponent must always return another N-position. Assuming the game is finite, the winning player eventually moves into an ending P-position from which the opponent cannot move.

Figure~\ref{gchompex}, for example, shows that both possible moves from $K_3$ are to path graphs, trees with only vertices of degree 1 and 2. We see later that all path graphs are N-positions, so we conclude that $K_3$ is a P-position. Numerically, the top right graph in Figure~\ref{gchompex} has a nim-value of $1$ and the bottom right graph has a nim-value of $2$, so $g(K_3)=\text{mex}\{1,2\}=0$.

Knowing the precise numerical nim-value of a position, rather than only its classification as N or P, is valuable in analyzing sums of games, in which each player can move in any one game on each turn. Given a set of games with nim-values $a_1,...,a_n$, the nim-value of their sum is $a_1\oplus\cdots\oplus a_n$ where $\oplus$ is the binary xor operation, defined as follows: the $m$th binary digit of the result is 0 if there are an even number of $a_i$ with $1$ for their $m$th binary digit and 1 if there are an odd number of $a_i$ with $1$ for their $m$th binary digit~\cite{Berlekamp}. For example, the game which combines the bottom right graph in Figure~\ref{gchompex} and a Nim-pile of two counters has nim-value 0 and is a P-position.

Draisma and van Rijnswou~\cite{Draisma} found that a forest has nim-value given by Table~\ref{forest} and that a complete graph is a P-position if and only if the number of vertices is a multiple of 3.

\begin{table}[htbp]
\begin{center}
\begin{tabular}{|c|cc|}\hline
\backslashbox{\# total vertices}{\# connected components}&even&odd\\ \hline
even&0&2\\
odd&3&1\\ \hline
\end{tabular}
\end{center}
\caption{The nim-value of a forest given the parity of the total number of vertices in all components and the parity of the number of connected components.}
\label{forest}
\end{table}

In particular, from Table~\ref{forest} we see that a single tree, such as a path graph, has nim-value $2$ if the number of vertices is even and $1$ if the number of vertices is odd. Also, a cycle always has nim-value $0$ since any move results in a path.

We introduce two more graph theory terms we use later.

\begin{definition}
A graph is \emph{bipartite} if its vertices can be split into two groups so that no two members of the same group share an edge. Equivalently, a graph is bipartite if it contains no odd cycles. 
\end{definition}

For example, all forests are bipartite, as are square grid graphs in any number of dimensions, hexagonal lattice graphs, and many far more irregular graphs.

\begin{definition}
A graph is \emph{$n$-partite} if its vertices can be split into $n$ groups so that no two members of the same group share an edge. A \emph{complete $n$-partite graph} $K_{a_1,...,a_n}$ has vertices which can be split into groups of size $a_1,...,a_n$ so that no two members of the same group share an edge and \emph{any} two members of distinct groups share an edge. The \emph{complete graph on $n$ vertices} $K_n$ is the complete $n$-partite graph with exactly one vertex in each group.
\end{definition}


\section{Reduction of Simplicial Complexes by Symmetry}
\label{invo}

In a path graph with an odd number of vertices, removing the center vertex results in two isomorphic components which can be won by copying the opponent's move in the opposite component at each turn. The winning first move is the same as if the starting position consisted only of that vertex. We found that the principle that certain kinds of symmetrical components can be added or removed without changing the outcome can be broadly extended.

\subsection{The Theorem}

\begin{definition}
For $\bigtriangleup$ a simplicial complex, we call the function $\tau:\bigtriangleup\rightarrow\bigtriangleup$ an \emph{involution} if it satisfies the following conditions:
\begin{enumerate}
\item $\tau$ restricted to the vertices of $\bigtriangleup$ is a permutation so that $\tau^2(A)=A$ for any vertex $A$.
\item $\sigma=\{\sigma_1,...,\sigma_k\}\in\bigtriangleup$ if and only if $\tau(\sigma)=\{\tau(\sigma_1),...,\tau(\sigma_k)\}\in\bigtriangleup$.
\end{enumerate}
\end{definition}

We denote the fixed point set of $\bigtriangleup$ under $\tau$ (that is, the set of elements of $\bigtriangleup$ whose images under $\tau$ are themselves) by $\bigtriangleup^{\tau}$. Essentially, an involution $\tau$ can be visualized as a way of splitting $\bigtriangleup$ into three disjoint parts $\bigtriangleup^{\tau}$, $T$, and $T'$ (none of which are necessarily simplicial complexes) so that $T$ and $T'$ are symmetric, where the symmetry is encoded in $\tau$ taking each element of $T$ to a corresponding element of $T'$ and vice versa. Draisma and van Rijnswou~\cite{Draisma} proved that if the fixed point set of a graph under an involution is also a graph, the fixed point set and the original graph are both P-positions or both N-positions. Christensen and Tilford~\cite{Christensen} extended this strategy for reducing a position to one configuration in a higher-dimensional simplicial complex. We prove a general result that encompasses both, as follows.

\begin{theorem}
Let $\bigtriangleup$ be a simplicial complex and $\tau$ be an involution on $\bigtriangleup$. Suppose the fixed point set $\bigtriangleup^{\tau}$ of $\tau$ is also a simplicial complex, so that there is no edge whose vertices are switched by $\tau$. Then $g(\bigtriangleup)=g(\bigtriangleup^{\tau})$.
\label{sym}
\end{theorem}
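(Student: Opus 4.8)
The plan is to reduce the identity $g(\bigtriangleup) = g(\bigtriangleup^{\tau})$ to a statement about a single game. Since both positions are impartial, the Sprague--Grundy sum rule quoted above gives $g(\bigtriangleup + \bigtriangleup^{\tau}) = g(\bigtriangleup)\oplus g(\bigtriangleup^{\tau})$, and this XOR is $0$ exactly when $g(\bigtriangleup) = g(\bigtriangleup^{\tau})$. So it suffices to show that the disjunctive sum $\bigtriangleup + \bigtriangleup^{\tau}$, in which a player may move in either copy on each turn, is a P-position, i.e.\ a win for the second player. I would prove this by exhibiting an explicit pairing (mirroring) strategy.

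First I would set up the invariant that drives the strategy. Writing $G$ for the surviving part of the $\bigtriangleup$-copy and $H$ for the surviving part of the $\bigtriangleup^{\tau}$-copy, the second player maintains, after each of his own moves, that $\tau(G)=G$ (the first copy is still closed under $\tau$) and that its fixed-point set $G^{\tau}$ equals $H$. At the start this holds, since $\tau$ is an involution on $\bigtriangleup$ and $\bigtriangleup^{\tau}$ is precisely its fixed-point set.

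Next I would check that the second player can always restore the invariant, by cases on the first player's move. If the first player removes some $\tau$-fixed $\varphi$ from $H$, the second player removes the same $\varphi$ from $G$; because $\varphi$ is fixed, the removed collection of supersets is $\tau$-closed, and a short set-theoretic check shows the new $G^{\tau}$ again equals the new $H$. The symmetric case, in which the first player removes a $\tau$-fixed $\sigma$ from $G$ and the second player mirrors it in $H$, is handled identically. The remaining case is when the first player removes a non-fixed $\sigma$ from $G$; then the second player removes $\tau(\sigma)$ from $G$. This response is legal because $\tau(\sigma)$ cannot be a superset of $\sigma$ (they have equal size and $\tau(\sigma)\neq\sigma$), so $\tau(\sigma)$ is not among the supersets just deleted. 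Removing the supersets of $\sigma$ together with those of $\tau(\sigma)$ keeps $G$ closed under $\tau$, so $\tau(G)=G$ is preserved. Since every response is available whenever the first player can move, and the game is finite, the second player makes the last move and wins, giving the desired P-position.

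The hard part will be the fixed-point bookkeeping in the last case: I must show that deleting the supersets of a non-fixed $\sigma$ and of $\tau(\sigma)$ removes no $\tau$-fixed element, so that $G^{\tau}$, and hence $H$, is untouched and the second clause of the invariant survives. This is exactly where the hypothesis is used. Because $\bigtriangleup^{\tau}$ is a simplicial complex --- equivalently, no edge has its two endpoints switched --- every vertex of a $\tau$-fixed set is itself $\tau$-fixed. A non-fixed $\sigma$ must contain a non-fixed vertex, so no $\tau$-fixed set can contain $\sigma$ (nor $\tau(\sigma)$), and therefore none is removed. Establishing this implication, together with the routine verifications that both clauses of the invariant persist under each of the three responses, is the technical core of the argument; the rest is the standard reduction to a sum of games.
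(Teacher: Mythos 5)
Your argument is correct, but it is organized quite differently from the paper's. You reduce the claim to showing that the disjunctive sum $\bigtriangleup+\bigtriangleup^{\tau}$ is a P-position (using the XOR rule quoted in the preliminaries) and win that sum by an explicit mirroring strategy: copy a fixed move between the two components, and answer a non-fixed move $\sigma$ with $\tau(\sigma)$ in the same component. The paper instead computes $g(\bigtriangleup)$ directly by induction on subcomplexes via the mex rule: it produces moves from $\bigtriangleup$ to every value $k<g(\bigtriangleup^{\tau})$ by playing inside the fixed-point set, and rules out a move to $n=g(\bigtriangleup^{\tau})$ by a two-case contradiction whose non-fixed case is precisely your mirroring response, since $(\bigtriangleup-\sigma-\tau(\sigma))^{\tau}=\bigtriangleup^{\tau}$. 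The central combinatorial fact is shared and you identify it correctly: because $\bigtriangleup^{\tau}$ is a simplicial complex, every vertex of a $\tau$-fixed set is itself fixed, so deleting a non-fixed $\sigma$ together with $\tau(\sigma)$ removes no fixed element; likewise your legality check that $\tau(\sigma)$ survives the removal of $\sigma$ (equal cardinality and $\tau(\sigma)\neq\sigma$) is needed implicitly in the paper's argument as well. What each route buys: yours yields a clean explicit second-player pairing strategy and avoids induction on Grundy values, at the cost of invoking the sum rule $g(A+B)=g(A)\oplus g(B)$; the paper's version is self-contained modulo the mex rule and produces the numerical value without passing through sums.
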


\begin{proof}
Given an element $\sigma$ of $\bigtriangleup$, we use the notation $\bigtriangleup-\sigma$ to mean the simplicial complex resulting from deleting $\sigma$ and all its supersets from $\bigtriangleup$.

We prove Theorem~\ref{sym} by induction. The base case, the empty graph, is trivial. Suppose the statement of the proposition is true for all sub-complexes of $\bigtriangleup$, and that $g(\bigtriangleup^{\tau})=n$. We need to prove that it is possible to move from $\bigtriangleup$ to a position with any nim-value less than $n$, and that it is impossible to move from $\bigtriangleup$ to a position with nim-value $n$.

If $\bigtriangleup^{\tau}=\bigtriangleup$, this is trivially true. Otherwise $\bigtriangleup^{\tau}\subsetneq\bigtriangleup$. Since $g(\bigtriangleup^{\tau})=n$, for any nim-value $k<n$ there exists $\sigma\in\bigtriangleup^{\tau}$ with $g(\bigtriangleup^{\tau}-\sigma)=k$. Note that $\tau$ can be restricted to an involution on $\bigtriangleup-\sigma$ so that $\bigtriangleup^{\tau}-\sigma=(\bigtriangleup-\sigma)^{\tau}$, giving $g((\bigtriangleup-\sigma)^{\tau})=k$. By the inductive hypothesis, we conclude that $g(\bigtriangleup-\sigma)=k$, so there is a move from $\bigtriangleup$ to a position with nim-value $k$ for any $k<n$.

Now suppose for the sake of contradiction that there is some $\sigma\in\bigtriangleup$ with $g(\bigtriangleup-\sigma)=n$. We have two cases: $\sigma\in\bigtriangleup^{\tau}$ and $\sigma\notin\bigtriangleup^{\tau}$. In the first case, we have by the inductive hypothesis that $g((\bigtriangleup-\sigma)^{\tau})=n$ and therefore that $g(\bigtriangleup^{\tau}-\sigma)=n$, a contradiction since $g(\bigtriangleup^{\tau})=n$. In the second case, note that because $\bigtriangleup^{\tau}$ is a simplicial complex, $\tau$ can be restricted to an involution on $\bigtriangleup-\sigma-\tau(\sigma)$ with $(\bigtriangleup-\sigma-\tau(\sigma))^{\tau}=\bigtriangleup^{\tau}$, from which $g((\bigtriangleup-\sigma-\tau(\sigma))^{\tau})=g(\bigtriangleup^{\tau})=n$. By the inductive hypothesis, we have $g(\bigtriangleup-\sigma-\tau(\sigma))=n$, again a contradiction since we assumed $g(\bigtriangleup-\sigma)=n$. We conclude that $\bigtriangleup$ has nim-value $n=g(\bigtriangleup^{\tau})$, as desired.
\end{proof}

\begin{remark}
Why Theorem~\ref{sym} requires the fixed point set to be a simplicial complex may not be immediately apparent. Abstractly, the reason is that otherwise the equation $(\bigtriangleup-\sigma-\tau(\sigma))^{\tau}=\bigtriangleup^{\tau}$ would not be valid for all $\sigma\notin\bigtriangleup^{\tau}$. 
As a concrete example of why this matters, Figure~\ref{realcountex2} shows a graph which is an N-position and has an involution which fixes edge $\{1,2\}$ and vertex $\{3\}$. Combinatorially, the fixed point set is the position $\{\{1,2\},\{3\}\}$, which, while not a simplicial complex, could be played by treating the edge as a vertex---but this gives a P-position. Thus the condition that the fixed point set is a simplicial complex is necessary.

\begin{figure}[htbp]
\begin{center}
\includegraphics{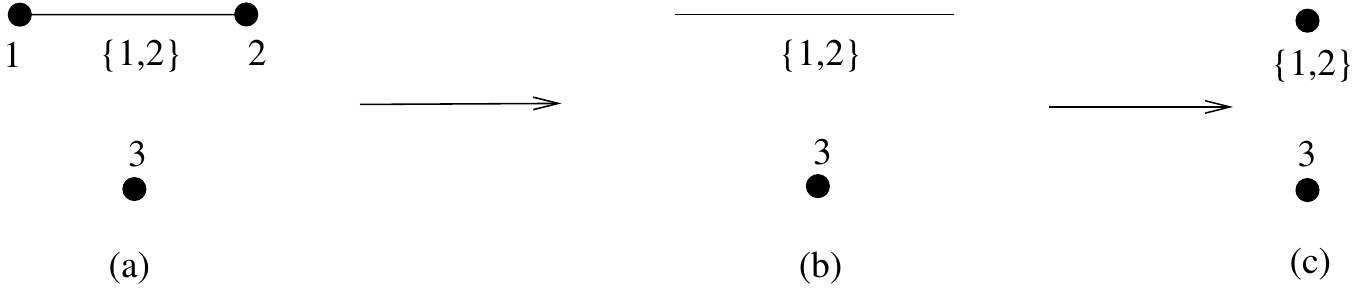}
\end{center}
\caption{(a) original graph. (b) fixed point set after involution $1\leftrightarrow2$, $3\leftrightarrow3$. (c) a representation of a nonstandard subset take-away game played on the fixed point set, created by treating the ``orphaned'' edge as a vertex.}
\label{realcountex2}
\end{figure}
\end{remark}

\begin{definition}
We say that simplicial complex $\bigtriangleup_1$ \emph{reduces to} $\bigtriangleup_2$, or $\bigtriangleup_1\downarrow\bigtriangleup_2$, if there is an involution $\tau$ on $\bigtriangleup_1$, not the identity, so that $\bigtriangleup_2$ is isomorphic to the fixed point set $\bigtriangleup_1^{\tau}$. We say that $\bigtriangleup_1$ \emph{eventually reduces to} $\bigtriangleup_2$, or $\bigtriangleup_1\Downarrow\bigtriangleup_2$, if there is a sequence $\bigtriangleup^{(1)}$, $\bigtriangleup^{(2)}$,...,$\bigtriangleup^{(k)}$ of non-isomorphic simplicial complexes so that $\bigtriangleup_1\downarrow\bigtriangleup^{(1)}$, $\bigtriangleup^{(i)}\downarrow\bigtriangleup^{(i+1)}$ for $1\le i\le k-1$, and $\bigtriangleup^{(k)}\downarrow\bigtriangleup_2$. We say that $\bigtriangleup$ is in \emph{simplest form} if it cannot be reduced to any other simplicial complex.
\label{shoehorn}
\end{definition}

\subsection{Reduction by Symmetry Example: A Triangulation of the Torus}

Suppose subset take-away is played on the simplicial complex shown in Figure~\ref{torus}. As this is a plane representation of a torus, the four corners of the square are actually the same vertex and the vertices on each edge are the same as the corresponding vertices on the opposite edge, as shown.

\begin{figure}[htbp]
\begin{center}
\includegraphics{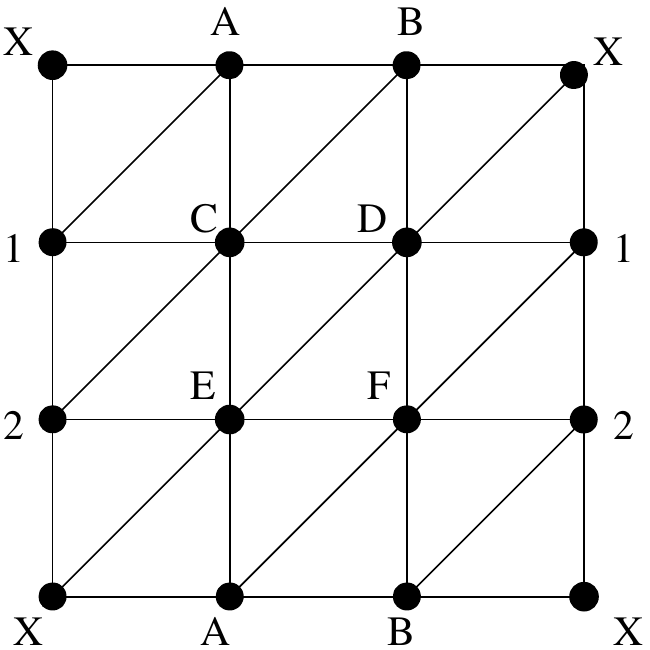}
\end{center}
\caption{A triangulation of the torus. Note that there are $18$ $2$-simplices in addition to the vertices and edges.}
\label{torus}
\end{figure}

What is the nim-value of this rather complicated picture? Theorem~\ref{sym} provides a quick answer. Consider the involution $\tau$ with $\tau(X)=X$, $\tau(D)=D$, $\tau(E)=E$, $\tau(1)=B$, $\tau(2)=A$, and $\tau(C)=F$. This corresponds to reflecting each vertex in Figure~\ref{torus} across the diagonal from the upper right to the lower left. The fixed point set is the graph with vertices $X$, $D$, $E$ and edges $\{X,D\}$, $\{D,E\}$, $\{E,X\}$, which is isomorphic to the $3$-cycle. Therefore the fixed point set is a P-position and from Theorem~\ref{sym}, the original triangulation is also a P-position. This example demonstrates the power of Theorem~\ref{sym} to simplify game positions.

\subsection{Nim-values of Complete $n$-Partite Graphs}

We now have the tools to derive the nim-value of any complete $n$-partite graph, as follows.

\begin{lemma}
The nim-value of $K_n$, the complete graph on $n$ vertices, is the residue of $n$ modulo $3$.
\label{complete}
\end{lemma}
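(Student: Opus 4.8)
The plan is to prove the claim by strong induction on $n$, reducing the analysis of $K_n$ to a short recursion in terms of $g(K_{n-1})$ and $g(K_{n-2})$. First I would classify the moves available from $K_n$. Since relabeling vertices does not change the nim-value and the automorphism group of $K_n$ acts transitively on both vertices and edges, there are—up to isomorphism—exactly two kinds of moves: removing a vertex, which turns $K_n$ into $K_{n-1}$, and removing an edge, which turns $K_n$ into the graph $K_n$ with a single edge deleted. Hence the set of nim-values reachable from $K_n$ is exactly $\{g(K_{n-1}),\, g(K_n - e)\}$, where $e$ denotes an arbitrary edge.

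The key step is to evaluate $g(K_n - e)$ using Theorem~\ref{sym}. Writing $e = \{1,2\}$, I would consider the involution $\tau$ that swaps the vertices $1$ and $2$ and fixes every other vertex. Because the edge $\{1,2\}$ has been deleted, $\tau$ preserves the remaining complex: each surviving edge $\{1,j\}$ is carried to $\{2,j\}$ and vice versa, while edges among $\{3,\dots,n\}$ are fixed. Moreover, the only pair of vertices exchanged by $\tau$ is $\{1,2\}$, and that edge is absent, so no edge has its two endpoints switched; hence the fixed-point set is a genuine simplicial complex and Theorem~\ref{sym} applies. That fixed-point set consists of the vertices $\{3,\dots,n\}$ together with all edges among them, i.e.\ a copy of $K_{n-2}$, so Theorem~\ref{sym} gives $g(K_n - e) = g(K_{n-2})$. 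Thus the recursion $g(K_n) = \text{mex}\{g(K_{n-1}),\, g(K_{n-2})\}$ holds for all $n \ge 2$.

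Finally I would close the induction. After checking the base cases $g(K_0)=0$ and $g(K_1)=1$ directly, the inductive hypothesis supplies $g(K_{n-1}) \equiv n-1$ and $g(K_{n-2}) \equiv n-2 \pmod 3$. A short case split on the residue of $n$ then shows that $\{g(K_{n-1}),\, g(K_{n-2})\}$ equals $\{2,1\}$, $\{0,2\}$, or $\{1,0\}$ according as $n \equiv 0$, $1$, or $2 \pmod 3$, and taking the mex of each of these two-element sets returns $0$, $1$, or $2$ respectively—precisely the residue of $n$ modulo $3$. I expect the one substantive point to be the edge-removal step: without the involution reduction one would have to analyze $K_n$ with a single missing edge directly, which is awkward, whereas Theorem~\ref{sym} collapses it to $K_{n-2}$ in a single stroke. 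The remaining verification of the mex in each residue class is routine.
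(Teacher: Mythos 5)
Your proposal is correct and follows essentially the same route as the paper: both arguments reduce $K_n$ minus an edge to $K_{n-2}$ via the involution swapping the two endpoints of the deleted edge (justified by Theorem~\ref{sym}), and then close a straightforward induction using the mex of $g(K_{n-1})$ and $g(K_{n-2})$. No gaps.
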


\begin{proof}
We will prove this by induction. The statement is certainly true for $n=0,1,2$. Suppose the statement is true for all $n'<n$. There are two possible moves from $K_n$, since the next player may take a vertex or an edge. Taking a vertex results in the position $K_{n-1}$, which by the inductive hypothesis has nim-value equal to the residue of $n-1$ modulo $3$. The result of taking an edge is $K_{n-2}$ with the addition of two vertices $A$ and $B$ connected to all of the other vertices but not to each other. Then consider the involution which takes $A$ to $B$ and all other vertices to themselves. The fixed point set is the graph $K_{n-2}$, so by Theorem~\ref{sym}, $A$, $B$, and all edges incident to them may be deleted without affecting the nim-value. Therefore taking an edge results in a position with nim-value equal to the residue of $n-2$ modulo $3$ by the inductive hypothesis. The mex of these two values is the residue of $n$ modulo $3$.
\end{proof}


\begin{theorem}
The nim-value of $K_{a_1,...,a_n}$, the complete $n$-partite graph with components of sizes $a_1,...,a_n$, is equal to the residue of $p$ modulo $3$ where $p$ is the number of odd terms among $a_1,...,a_n$.
\end{theorem}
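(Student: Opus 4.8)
The plan is to collapse an arbitrary complete $n$-partite graph down to a complete graph by repeatedly applying Theorem~\ref{sym} with a ``twin-vertex'' involution, and then to read off the answer from Lemma~\ref{complete}. The key observation is that any two vertices lying in the same part of $K_{a_1,\ldots,a_n}$ play interchangeable roles: they are non-adjacent to each other and adjacent to exactly the same set of outside vertices.

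First I would fix a part of size $a_i\ge 2$ and pick two of its vertices $A$ and $B$. Because $A$ and $B$ have identical neighborhoods, the transposition $\tau$ swapping $A$ with $B$ and fixing every other vertex is a graph automorphism, which verifies the second condition in the definition of an involution; the first condition holds since $\tau^2$ is the identity. Since $A$ and $B$ lie in the same part, $\{A,B\}$ is not an edge, so no edge has its two endpoints switched by $\tau$, and the fixed point set is a genuine simplicial complex. Thus Theorem~\ref{sym} applies.

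Next I would identify the fixed point set explicitly. The only non-fixed elements of $\bigtriangleup$ are $A$, $B$, and the edges incident to one of them; everything else is fixed. Deleting the non-fixed elements leaves precisely $K_{a_1,\ldots,a_i-2,\ldots,a_n}$, so Theorem~\ref{sym} gives $g(K_{a_1,\ldots,a_i,\ldots,a_n})=g(K_{a_1,\ldots,a_i-2,\ldots,a_n})$ whenever $a_i\ge 2$. Iterating this reduction on every part lowers each $a_i$ by two until it reaches its parity: an even part shrinks to size $0$ and vanishes, while an odd part shrinks to a single vertex. The resulting graph is the complete $p$-partite graph with exactly one vertex per part, which is $K_p$ where $p$ is the number of odd terms among $a_1,\ldots,a_n$. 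By Lemma~\ref{complete}, $g(K_{a_1,\ldots,a_n})=g(K_p)$ equals the residue of $p$ modulo $3$, as claimed.

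The main obstacle is not conceptual but bookkeeping: one must confirm that the fixed point set of the twin involution is \emph{exactly} $K_{a_1,\ldots,a_i-2,\ldots,a_n}$ rather than some larger configuration, and handle the degenerate step in which a part of size two disappears altogether. Once the twin structure is noticed, the symmetry reduction is immediate; the real content is recognizing that only the parity of each part size, and hence the count $p$ of odd parts, survives the repeated reduction to $K_p$.
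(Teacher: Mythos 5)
Your proposal is correct and follows exactly the paper's argument: repeatedly swap two non-adjacent vertices in the same part, apply Theorem~\ref{sym} to reduce that part's size by two, and terminate at $K_p$ to invoke Lemma~\ref{complete}. No substantive differences.
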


\begin{proof}
Whenever $a_i\ge2$, we take two distinct vertices $A$ and $B$ in the $i$th component and consider the involution which interchanges $A$ and $B$ and takes all other vertices to themselves. The fixed point set is the same graph with vertices $A$ and $B$ and all edges incident to them deleted; thus by Theorem~\ref{sym} we can reduce $a_i$ by two without changing the nim-value. We repeat this process until it terminates with 0 vertices in the parts where the original number of vertices was even and 1 vertex in the parts where the original number of vertices was odd. The remaining graph is $K_p$ where $p$ is the number of parts with an odd number of vertices, which by Lemma~\ref{complete} has nim-value equal to the residue of $p$ modulo $3$, as desired.
\end{proof}


\section{General Bipartite Graphs}
\label{bipart}

We now prove our theorem giving the nim-value of any bipartite graph. For the rest of this paper, we denote the number of vertices in a graph $G$ by $v(G)$ and the number of edges by $e(G)$. We begin with a two-part lemma.

\begin{lemma}
1. If $v(G)$ is odd, $G$ must include a vertex of even degree. 2. If $G$ is bipartite and $e(G)$ is odd, $G$ must include a vertex of odd degree.
\label{achievable}
\end{lemma}

\begin{proof}
To prove part 1 of Lemma~\ref{achievable}, assume $v(G)$ is odd. The sum of the degrees of all the vertices of $G$ is twice $e(G)$ and so must be even. Since $v(G)$ is odd, this sum cannot be even if all the degrees are odd, so $G$ must have a vertex of even degree. To prove part 2, assume $e(G)$ is odd. Since $G$ is bipartite, we can divide the vertices of $G$ into two groups $A$ and $B$ so that there are no edges between members of the same group. Then $e(G)$ is the sum of the degrees of the vertices in part $A$. This sum cannot be odd if all the degrees are even, so $A$ must contain a vertex of odd degree.
\end{proof}

\begin{theorem}
\label{fairest}
All bipartite graphs have nim-values as given by Table~\ref{amazing}.
\end{theorem}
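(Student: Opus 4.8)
The plan is to prove the theorem by strong induction on $v(G)+e(G)$, showing that for every bipartite $G$ the value $g(G)$ depends only on the parities of $v(G)$ and $e(G)$: explicitly, $g(G)=0,1,2,3$ according as $(v(G)\bmod 2,\,e(G)\bmod 2)$ equals $(0,0),(1,0),(0,1),(1,1)$ respectively, equivalently $g(G)=(v(G)\bmod 2)+2(e(G)\bmod 2)$. This is exactly the pattern already recorded for forests in Table~\ref{forest}, rewritten through the identity $e=v-c$ so that the number-of-components axis becomes the parity of $e$. The base case is the empty graph, with $v=e=0$ and $g=0$. Every legal move deletes a vertex together with its incident edges, or deletes a single edge, and in either case produces a subgraph of $G$, which is again bipartite, so the inductive hypothesis applies to all positions reachable in one move.

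First I would tabulate how each move type changes the parity pair: deleting an edge flips the parity of $e$ and fixes that of $v$; deleting a vertex of even degree flips the parity of $v$ and fixes that of $e$; and deleting a vertex of odd degree flips both. Writing $n$ for the claimed value of $g(G)$ and invoking the inductive hypothesis on the resulting positions, this means an edge-deletion reaches nim-value $n\oplus 2$, an even-degree-vertex deletion reaches $n\oplus 1$, and an odd-degree-vertex deletion reaches $n\oplus 3$. In particular every move flips at least one parity, so no move can reach nim-value $n$ itself; this immediately gives the ``$n$ is excluded'' half of the mex computation and disposes entirely of the case $n=0$.

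It then remains to show that every value $k<n$ is reachable, which I would handle by a short case analysis on the parity pair of $G$. The content is to check that whenever $G$ is supposed to supply a move of a given type, such a move really exists. Edge-deletion is available whenever $e(G)\ge 1$, which holds automatically whenever $e(G)$ is odd. The existence of vertices of a prescribed degree parity is exactly what Lemma~\ref{achievable} provides: when $v(G)$ is odd it guarantees a vertex of even degree, and when $G$ is bipartite with $e(G)$ odd it guarantees a vertex of odd degree. Matching these against the four cases---for instance, if $(v,e)$ is $(\text{odd},\text{odd})$ so that $n=3$, one reaches $1$ by deleting an edge, $2$ by deleting an even-degree vertex (available since $v$ is odd), and $0$ by deleting an odd-degree vertex (available since $e$ is odd and $G$ is bipartite)---shows that $\{0,\dots,n-1\}$ is covered in each case, completing the mex computation.

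The main obstacle is precisely this last existence question: the parity bookkeeping alone shows which nim-value a move of each type would reach, but not that such a move is available, and a naive induction stalls exactly where one needs a vertex of a particular degree parity. Lemma~\ref{achievable} is tailored to remove this obstacle, and the cleanest framing is to observe that its two parts are invoked in exactly the cases where the corresponding degree-parity vertex is needed, so that no further combinatorial input is required.
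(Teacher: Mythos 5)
Your proposal is correct and follows essentially the same route as the paper's proof: induction on subgraphs, the parity bookkeeping for each of the three move types (which the paper records in Table~\ref{induction} and you encode as $n\oplus 2$, $n\oplus 1$, $n\oplus 3$), and Lemma~\ref{achievable} to guarantee that the needed moves exist in each parity case. The xor formulation $g(G)=(v(G)\bmod 2)+2(e(G)\bmod 2)$ is a tidy restatement of Table~\ref{amazing} but does not change the substance of the argument.
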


\begin{proof}
We prove this by induction, since all subgraphs of a bipartite graph are also bipartite. The base case, the empty graph, is trivial. Given a graph $G$, assume the statement is true for all subgraphs of $G$. Note that deleting an edge changes the parity of $e(G)$ but not of $v(G)$, that deleting a vertex of even degree changes the parity of $v(G)$ but not $e(G)$, and that deleting a vertex of odd degree changes the parity of both $v(G)$ and $e(G)$. Based on this, we create a table of the starting parities of $v(G)$ and $e(G)$ along with their parities after each type of move, as in Table~\ref{induction}.

\begin{table}
\begin{center}
\begin{tabular}{|c|cc|}\hline
\backslashbox{v(G)}{e(G)}&even&odd\\ \hline
even&0&2\\
odd&1&3\\ \hline
\end{tabular}
\end{center}
\caption{The nim-value of a bipartite graph given the parity of the number of vertices and the parity of the number of edges.}
\label{amazing}
\end{table}

\begin{table}
\begin{center}
\begin{tabular}{|c|c|c|c|c|}\hline
starting parities&take edge&take vertex (even degree)&take vertex (odd degree)&mex\\ \hline
EE&EO (2)&OE (1)&OO (3)&0\\
OE&OO (3)&EE (0)&EO (2)&1\\
EO&EE (0)&OO (3)&OE (1)&2\\
OO&OE (1)&EO (2)&EE (0)&3\\ \hline
\end{tabular}
\end{center}
\caption{Parities of the number of vertices and the number of edges in original $G$ and after moves of each possible type; the first letter in each entry denotes the parity of the number of vertices and the second the parity of the number of edges (E for even, O for odd). The numbers in parentheses are nim-values from the inductive hypothesis.}
\label{induction}
\end{table}

Table~\ref{induction} shows that for each set of parities, $g(G)$ is at most the value shown in the ``mex'' column, consistent with Table~\ref{amazing}. To finish the induction, we need only confirm that there exists a move from each starting position in Table~\ref{induction} to a position with each lower nim-value. We check each row of Table~\ref{induction} from top to bottom.

For case EE, there is nothing to confirm; Table~\ref{induction} shows that $g(G)=0$.

For case OE, we must show that $G$ contains a vertex of even degree so that it can be moved to a position with nim-value 0. Since $v(G)$ is odd, this is true by part 1 of Lemma~\ref{achievable}.

For case EO, we must show that $G$ contains an edge so that it can be moved to a position with nim-value 0 as well as a vertex of odd degree so that it can be moved to a position with nim-value 1. Since $e(G)$ is odd, $G$ certainly does contain an edge, and it contains a vertex of odd degree by part 2 of Lemma~\ref{achievable}.

For case OO, we must show that $G$ contains an edge, a vertex of even degree, and a vertex of odd degree. Since $e(G)$ is odd, $G$ contains an edge, and $G$ contains a vertex of odd degree by part 2 of Lemma~\ref{achievable}. Since $v(G)$ is odd, $G$ contains a vertex of even degree by part 1 of Lemma~\ref{achievable}.

This completes the induction.
\end{proof}

\begin{remark}
As one would expect, Table~\ref{amazing} is consistent with the binary xor rule for adding disjoint graphs. For example, combining bipartite $G$ with $v(G)$ odd and $e(G)$ even (nim-value $1$ from Table~\ref{amazing}) and bipartite $H$ with $v(H)$ odd and $e(H)$ odd (nim-value $3$ from Table~\ref{amazing}) results in a bipartite graph with an even number of vertices and odd number of edges. Using Table~\ref{amazing} and taking $g(G)\oplus g(H)=1\oplus3$ both give a nim-value of $2$ for the combined graph.
\end{remark}

\begin{corollary}
\label{narcissus}
Suppose $G$ is a forest. If $G$ has an even number of components, $g(G)$ is $0$ if $v(G)$ is even and $3$ if $v(G)$ is odd. If $G$ has an odd number of components, $g(G)$ is $2$ if $v(G)$ is even and $1$ if $v(G)$ is odd. (This is a special case of Theorem~\ref{fairest}.)
\end{corollary}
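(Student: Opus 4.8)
The plan is to deduce this corollary directly from Theorem~\ref{fairest} by translating its hypotheses, phrased in terms of the number of components and vertices, into the language of Table~\ref{amazing}, which is phrased in terms of the parities of $v(G)$ and $e(G)$. Since every forest is bipartite (noted earlier in the paper), Theorem~\ref{fairest} applies to $G$, so it suffices to compute the parity of $e(G)$ in each of the four cases listed in the corollary.

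The key structural fact I would invoke is the standard counting identity for forests: a forest with $c$ connected components satisfies $e(G)=v(G)-c$. This follows because each tree component on $k$ vertices has exactly $k-1$ edges, so summing over all components gives a total edge count of $v(G)$ minus the number of components. From this identity, the parity of $e(G)$ is determined entirely by the parities of $v(G)$ and $c$: namely $e(G)$ is even precisely when $v(G)$ and $c$ have the same parity, and odd precisely when they have opposite parity.

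Next I would carry out the four-case parity check. When $c$ is even and $v(G)$ is even, the identity gives $e(G)$ even, placing $G$ in the EE cell of Table~\ref{amazing} with nim-value $0$. When $c$ is even and $v(G)$ is odd, $e(G)$ is odd, giving the OO cell and nim-value $3$. When $c$ is odd and $v(G)$ is even, $e(G)$ is odd, giving the EO cell and nim-value $2$. Finally, when $c$ is odd and $v(G)$ is odd, $e(G)$ is even, giving the OE cell and nim-value $1$. In each case the value read off from Table~\ref{amazing} matches the value asserted by the corollary.

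I do not anticipate a genuine obstacle here, since the result is a pure specialization: the entire content lies in the identity $e(G)=v(G)-c$ together with routine parity bookkeeping. The only point requiring any care is ensuring the parity translation is done consistently, in particular remembering that subtracting $c$ flips the parity of $v(G)$ exactly when $c$ is odd; getting this sign bookkeeping straight is what makes the four cases line up correctly with the four cells of the table.
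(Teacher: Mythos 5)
Your proposal is correct and follows the same route as the paper: note that forests are bipartite so Theorem~\ref{fairest} applies, use the identity $e(G)=v(G)-c(G)$, and read off the four parity cases from Table~\ref{amazing}. All four case checks in your write-up agree with the table and with the statement of the corollary.
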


\begin{proof}
Forests are special instances of bipartite graphs, so Theorem~\ref{fairest} is applicable. Let $G$ have $c(G)$ components. Since $G$ is a forest, we have $e(G)=v(G)-c(G)$. Thus if $c(G)$ is even, $e(G)$ and $v(G)$ have the same parity and from Table~\ref{amazing}, $g(G)$ is $0$ if $v(G)$ is even and $3$ if $v(G)$ is odd. If $c(G)$ is odd, $e(G)$ and $v(G)$ have opposite parities and again from Table~\ref{amazing}, $g(G)$ is $2$ if $v(G)$ is even and $1$ if $v(G)$ is odd.
\end{proof}

\begin{remark}
Note that Draisma and van Rijnswou~\cite{Draisma} proved Corollary~\ref{narcissus} using a less general method, as mentioned in Section~\ref{prelim}.
\end{remark}

\section{Odd-cycle Pseudotrees: A Special Case}
\label{grah} 

Non-bipartite graphs, even simple ones, exhibit much more complicated behavior than bipartite graphs. Consider a pseudotree, a graph with exactly one cycle, as in Figure~\ref{pseudotree}; it can be thought of as a cycle with trees possibly attached to each vertex in the cycle. Note that a pseudotree with an even cycle is bipartite with exactly as many edges as vertices, so it has nim-value $3$ if the number of vertices is odd and $0$ if the number of vertices is even. 

\begin{figure}[htbp]
\begin{center}
\includegraphics[scale=0.7]{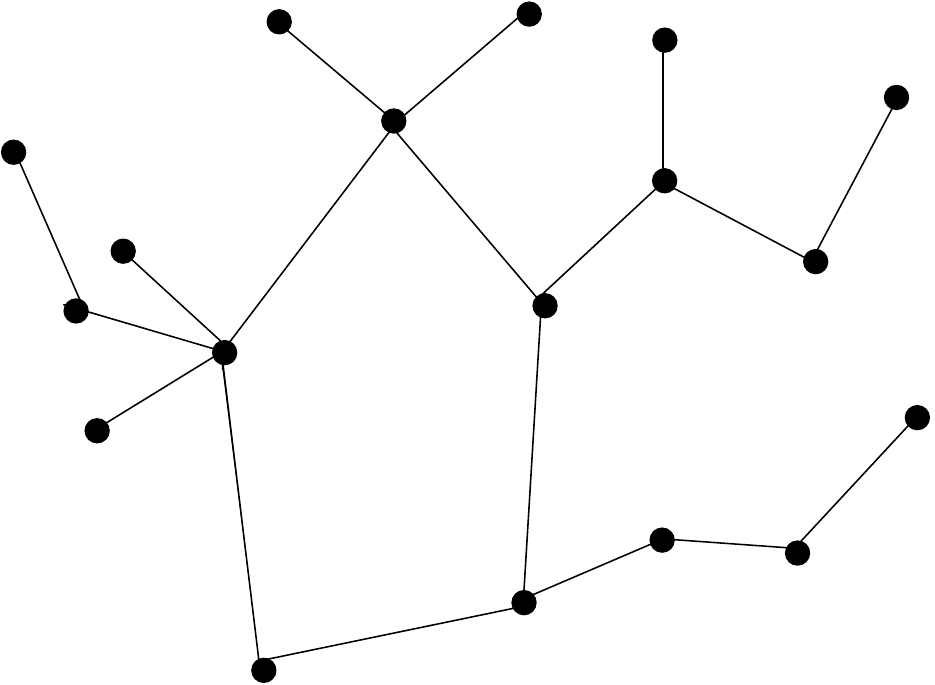}
\end{center}
\caption{A pseudotree.}
\label{pseudotree}
\end{figure}

For the rest of this section, we let $G$ be a pseudotree, $C_G$ be its cycle, and $v(G)$ be the number of vertices in $G$. We assume $G$ is in simplest form according to Definition~\ref{shoehorn} and $v(C_G)$ is odd. We now prove a way to compute some nim-values for one class of odd-cycle pseudotrees.

\begin{theorem}
Suppose vertex $A$ of $C_G$ has degree 3 and all other vertices of $C_G$ have degree $2$. Let $B$ be the vertex not in $C_G$ connected to $A$, as in Figure~\ref{ocptree}. Then $g(G)$ is at least $4$ if $B$ has odd degree, exactly $3$ if $B$ has even degree and $v(G)$ is odd, and exactly $0$ if $B$ has even degree and $v(G)$ is even.
\label{ocpt1}
\end{theorem}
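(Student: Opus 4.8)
The plan is to induct on $v(G)$ using the mex rule, exploiting the fact that $G$ is an odd cycle $C_G$ carrying a single tree $T_B$ (attached to $A$ through the edge $AB$), so that $v(G)=e(G)$. I would first sort the available moves into two families. A \emph{cycle-breaking} move---deleting any edge of $C_G$, any degree-$2$ cycle vertex, or $A$ itself---destroys the unique cycle and leaves a forest, hence a bipartite position whose nim-value is given by Theorem~\ref{fairest}. A \emph{branch} move---deleting $AB$, deleting $B$, or deleting an edge or vertex strictly inside $T_B$---leaves $C_G$ intact and produces a disjoint union $P'\sqcup R$, where $P'$ is a smaller pseudotree of exactly the same shape (odd cycle, unique degree-$3$ vertex $A$, branch neighbour $B$) and $R$ is the detached forest hanging below the deleted element.

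The organizing device is the parity potential $\phi(H)=2\,(e(H)\bmod 2)+(v(H)\bmod 2)$. By Theorem~\ref{fairest}, $g(H)=\phi(H)$ for every bipartite $H$; moreover $\phi$ sends disjoint unions to $\oplus$, and every move changes $\phi$ (deleting an edge flips the $e$-bit, deleting a vertex flips the $v$-bit). Since $e(G)=v(G)$, the target value is exactly $\phi(G)=3\,(v(G)\bmod 2)$, equal to $0$ when $v(G)$ is even and $3$ when $v(G)$ is odd. Thus the theorem amounts to two claims: $g(G)=\phi(G)$ when $\deg B$ is even, and $g(G)\ge 4$ when $\deg B$ is odd.

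For the odd-degree case I only need $0,1,2,3$ all to be reachable. The cycle-breaking moves together with the deletion of $AB$ already realize the three values different from $\phi(G)$ (each is a forest whose $\phi$ I read off from Theorem~\ref{fairest}). Deleting $B$ leaves $C_G\sqcup F$, where $F$ is the forest of $B$'s child-subtrees; because $\deg B$ is odd, $F$ has an even number of components, and a short parity count gives $g(C_G\sqcup F)=\phi(F)=\phi(G)$, the missing fourth value, so $\mathrm{mex}\ge 4$. For the even-degree case I must instead show $\phi(G)$ is \emph{un}reachable. Every cycle-breaking move yields a bipartite $G'$ with $g(G')=\phi(G')\ne\phi(G)$, since the move changed $\phi$. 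For a branch move giving $P'\sqcup R$ I split on the branch degree in $P'$: if it is odd then $g(P')\ge 4$ by induction and xoring with the forest value $g(R)\le 3$ cannot clear the high bits, so $g(P'\sqcup R)\ge 4\ne\phi(G)$; if it is even then $g(P')=\phi(P')$ by induction, whence $g(P'\sqcup R)=\phi(P')\oplus\phi(R)=\phi(P'\sqcup R)\ne\phi(G)$ by the same ``every move changes $\phi$'' observation. The two fully-detaching moves ($P'=C_G$, namely deleting $AB$ or $B$) I check directly to land on $1$ or $2$. Combining, the reachable set is $\{0,1,2,3\}\setminus\{\phi(G)\}$, so $\mathrm{mex}=\phi(G)$.

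The main obstacle is a bookkeeping subtlety in the inductive step: the pseudotree $P'$ produced by a branch move need not be in simplest form, whereas the inductive hypothesis is phrased for simplest forms. I would resolve this with Theorem~\ref{sym}. Because $A$ is the unique degree-$3$ vertex, every involution of $P'$ fixes $A$ and hence the whole cycle pointwise---the only cycle reflection through $A$ would swap the endpoints of the antipodal edge of the odd cycle, which Theorem~\ref{sym} explicitly forbids---so reductions act only by swapping branch subtrees in $\tau$-conjugate pairs. Such a reduction therefore deletes an even number of vertices, an even number of edges, and an even number of edges at $B$; consequently it preserves $v\bmod 2$, $e\bmod 2$ (hence $\phi$), and $\deg B\bmod 2$. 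This invariance lets me apply the hypothesis to $P'$ through its simplest form with all relevant parities intact, and it is exactly what makes the clean $\phi$-calculus above legitimate. Verifying this parity-invariance of reduction, together with the routine forest computations feeding the easy direction, is where the real work lies; once they are in place the game-theoretic content reduces to the mex bookkeeping sketched above.
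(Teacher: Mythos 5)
Your proof is correct and follows essentially the same route as the paper's: classify moves by whether they destroy the cycle or only prune the attached tree, read off forest values from Theorem~\ref{fairest}, and run an induction for the even-degree case in which a pruning move yields a pseudotree $P'$ plus a forest $R$, handled either by $g(P')\ge 4$ swamping $g(R)\le 3$ or by a parity argument. Your potential $\phi(H)=2(e(H)\bmod 2)+(v(H)\bmod 2)$ with the observation that every move flips $\phi$ is exactly the content of the paper's Table~\ref{induction}, and it cleanly merges the paper's two sub-cases (``$3$ and $0$ or $0$ and $3$'' versus ``both $0$ or both $3$'') into the single line $\phi(P'\sqcup R)\ne\phi(G)$. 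The one place you go beyond the paper is the explicit verification that a branch move's output $P'$, though possibly not in simplest form, reduces via Theorem~\ref{sym} to a simplest-form pseudotree with the same parities of $v$, $e$, and $\deg B$ (since any admissible involution must fix $A$, hence the odd cycle pointwise, hence $B$); the paper applies its inductive hypothesis to such $H$ without comment, so this is a genuine, if small, tightening. One cosmetic slip: the reachable value set is not literally $\{0,1,2,3\}\setminus\{\phi(G)\}$, since pruning moves can reach values $\ge 4$; what your argument actually shows, and all the mex needs, is that $\phi(G)$ is unreachable while every smaller value is reachable.
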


\begin{figure}[htbp]
\begin{center}
\includegraphics[scale=0.8]{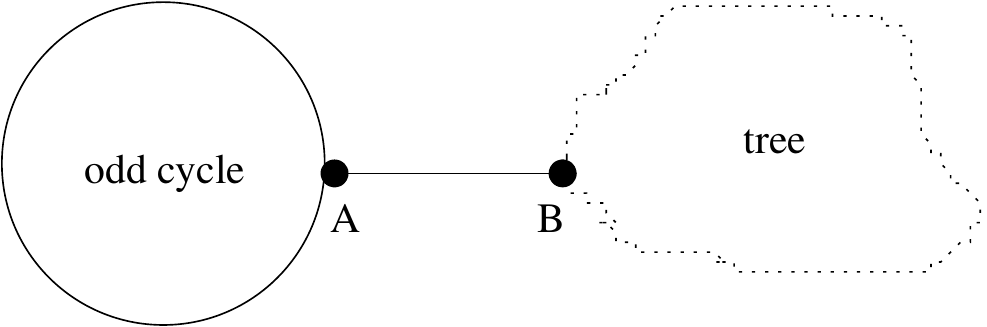}
\end{center}
\caption{An odd cycle connected to a single tree by an edge.}
\label{ocptree}
\end{figure}

\begin{proof}
Suppose $B$ has odd degree. Then, according to Corollary~\ref{narcissus}, removing $A$ results in a two-component forest with nim-value 0 if $v(G)$ is odd and 3 if $v(G)$ is even. Removing $B$ results in the union of an odd cycle and a forest with an even number of components. As the cycle's nim-value is 0 and the forest's nim-value is 3 if $v(G)$ is odd and 0 if $v(G)$ is even by Corollary~\ref{narcissus}, the resulting position has nim-value 3 if $v(G)$ is odd and 0 if $v(G)$ is even. So whatever the parity of $v(G)$, it is possible to move from $G$ to positions with nim-values 0 and 3. According to Corollary~\ref{narcissus}, it is also possible to move from $G$ to trees with nim-values 1 and 2 by removing an edge or a vertex from $C_G$ other than $A$. Thus $g(G)$ is at least 4.

Now suppose $B$ has even degree. We prove the rest of the theorem by induction. The base case when $B$ is connected to a leaf is easily verified. Assume Theorem~\ref{ocpt1} for all subgraphs of $G$ satisfying the hypotheses. We prove that it is true for $G$ by separately dealing with two cases: $v(G)$ odd and $v(G)$ even.

Case 1: $v(G)$ is odd. Deleting an edge of $C_G$ gives a tree with nim-value 1. Deleting a vertex of $C_G$ other than $A$ gives a tree with nim-value 2. Deleting vertex $A$ gives a two-component forest with nim-value 0. So $g(G)$ is not 0, 1, or 2 and we need only prove that no other moves from $G$ go to a position with nim-value 3. Deleting edge AB or vertex B results in a union of a cycle with nim-value 0 and an odd-component forest with nim-value 2 or 1, which has overall nim-value 2 or 1. Deleting an edge outside the cycle other than AB results in the disjoint sum of a pseudotree $H$ and a tree $T$. Since $g(T)$ is always 1 or 2, $g(H)$ must be 2 or 1 for a nim-sum of 3, but this is impossible since no case of the inductive hypothesis allows $g(H)$ to be 2 or 1.

Finally, deleting a vertex outside the cycle other than B results in the disjoint union of a pseudotree $H$ and a forest $F$. Since $g(H)$ is limited to 0, 3, or 4 or greater by the inductive hypothesis and $g(F)$ is limited to $\{0,1,2,3\}$, $g(H)$ and $g(F)$ should be 3 and 0 or 0 and 3 respectively to have a nim-sum of 3. In the first case $v(H)$ must be odd and $v(F)$ even, and in the second case $v(H)$ must be even and $v(F)$ odd. Thus both possibilities require an odd total number of vertices to remain after deletion of the vertex, which is impossible since $v(G)$ is odd. Thus no move takes $G$ to a position of nim-value $3$, and $G$ must have nim-value $3$.

Case 2: $v(G)$ is even. We need to show that no moves from $G$ go to positions with nim-value 0. Deleting an edge of $C_G$ gives a tree with nim-value 2. Deleting a vertex of $C_G$ other than $A$ gives a tree with nim-value 1. Deleting vertex $A$ gives a two-component forest with nim-value 3. Deleting edge AB or vertex B results in a union of a cycle with nim-value 0 and an odd-component forest with nim-value 1 or 2, which has overall nim-value 1 or 2. Deleting an edge outside the cycle other than AB results in the disjoint sum of a pseudotree $H$ and a tree $T$. Since $g(T)$ is always 1 or 2, $g(H)$ must be 1 or 2 for a nim-sum of 0, but this is impossible since no case of the inductive hypothesis allows $g(H)$ to be 1 or 2.

Finally, deleting a vertex outside the cycle other than B results in the disjoint union of a pseudotree $H$ and a forest $F$. Since $g(H)$ is limited to 0, 3, or 4 or greater by the inductive hypothesis and $g(F)$ is limited to $\{0,1,2,3\}$, we must have $g(H)=g(F)=0$ or $g(H)=g(F)=3$ to obtain a nim-sum of 0. In the first case $v(H)$ and $v(F)$ must both be even, and in the second case $v(H)$ and $v(F)$ must both be odd. Thus both possibilities require an even total number of vertices to remain after deletion of the vertex, which is impossible since $v(G)$ is even. Thus no move takes $G$ to a position of nim-value $0$, and $G$ must have nim-value $0$.
\end{proof}

\subsection{The Graph $G_{m,k}$}

In general, when $v(C_G)$ is odd, the function $g(G)$ can become extremely complicated. We now calculate exact nim-values for a very simple odd-cycle pseudotree.

\begin{definition}
In a graph, a $k$\emph{-tail} is $k-1$ vertices of degree $2$ connected in sequence, together with a leaf connected to the $(k-1)$th vertex.
\label{tail}
\end{definition}

Define $G_{m,k}$ to be a pseudotree with cycle $C_{m,k}$ such that $v(C_{m,k})$ is odd and all the vertices in $C_{m,k}$ except for a single vertex $A$ have degree 2. $A$ has degree 3 and is connected to vertex $B$ not in the cycle, and $B$ is connected to an $m$-tail and a $k$-tail as in Definition~\ref{tail}. See Figure~\ref{Gmk}.

\begin{figure}[htbp]
\begin{center}
\includegraphics[scale=0.8]{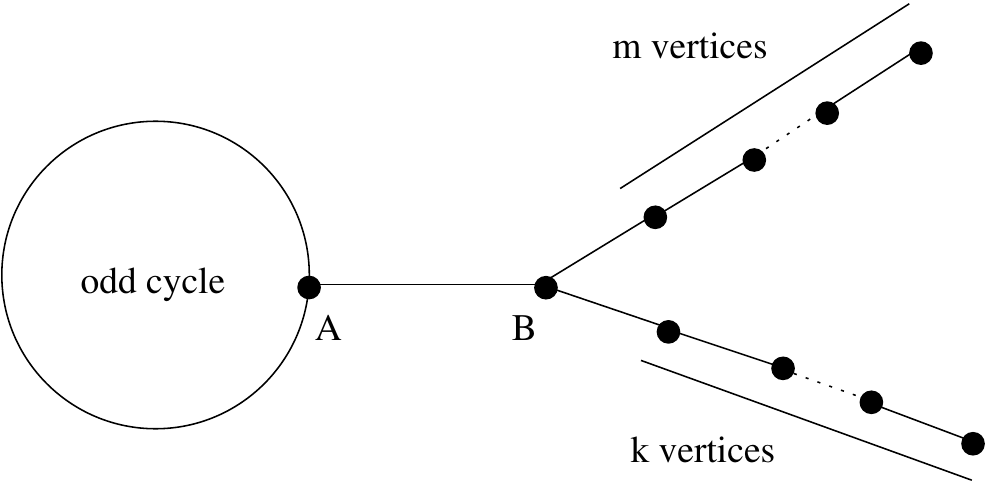}
\end{center}
\caption{The graph $G_{m,k}$, an odd cycle connected to a vertex which is connected to an $m$-tail and a $k$-tail.}
\label{Gmk}
\end{figure}

Note that in this case, the exact value of $v(C_{m,k})$ does not affect the nim-value as long as it is odd, as can be proven by a quick induction on $k$ and $m$. It is easy to check that $g(G_{0,0})=4$ regardless of the size of the cycle. Let $G'_{m,k}$ be a pseudotree with the same tail configuration as $G_{m,k}$ but a differently-sized cycle, and suppose $g(G_{m',k'})=g(G'_{m',k'})$ whenever $\{m'<m$ and $k'=k\}$ or $\{k'<k$ and $m'=m\}$. Any move in the $m$-tail of $G_{m,k}$ results in the union of $G_{m',k}$ and a path, and the corresponding move in $G'_{m,k}$ results in the union of $G'_{m',k}$ and an isomorphic path, which has the same nim-value by the inductive hypothesis. The same is true of moves in the $k$-tails, and we already showed in the proof of Theorem~\ref{ocpt1} that moves outside the tails of $G_{m,k}$ and $G'_{m,k}$ yield the same set of nim-values. Applying the mex rule, we conclude that $g(G_{m,k})=g(G'_{m,k})$.

Let $g_{m,k}=g(G_{m,k})$. Table~\ref{3by3} shows $g_{m,k}$ for small values of $m$ and $k$.

\begin{table}[htbp]
\begin{center}
\begin{tabular}{|c|ccc|ccc|ccc|ccc|}\hline
\backslashbox{$m$}{$k$}&1&2&3&4&5&6&7&8&9&10&11&12\\ \hline
1&4&6&4&8&10&8&12&14&12&16&18&16\\
2&6&4&6&10&8&10&14&12&14&18&16&18\\
3&4&6&4&8&10&8&12&14&12&16&18&16\\ \hline
4&8&10&8&4&6&4&16&18&16&12&14&12\\
5&10&8&10&6&4&6&18&16&18&14&12&14\\
6&8&10&8&4&6&4&16&18&16&12&14&12\\ \hline
7&12&14&12&16&18&16&4&6&4&8&10&8\\
8&14&12&14&18&16&18&6&4&6&10&8&10\\
9&12&14&12&16&18&16&4&6&4&8&10&8\\ \hline
10&16&18&16&12&14&12&8&10&8&4&6&4\\
11&18&16&18&14&12&14&10&8&10&6&4&6\\
12&16&18&16&12&14&12&8&10&8&4&6&4\\ \hline
\end{tabular}
\end{center}
\caption{Values of $g_{m,k}$ for small $m$ and $k$.}
\label{3by3}
\end{table}

We observe that Table~\ref{3by3} is made up of $3\times3$ boxes of a specific form, as follows.

\begin{theorem}
\label{boxy}
For any nonnegative integers $a$ and $b$, we have
$$\begin{pmatrix}g_{3a+1,3b+1}&g_{3a+1,3b+2}&g_{3a+1,3b+3}\\
g_{3a+2,3b+1}&g_{3a+2,3b+2}&g_{3a+2,3b+3}\\
g_{3a+3,3b+1}&g_{3a+3,3b+2}&g_{3a+3,3b+3}\end{pmatrix}
=\begin{pmatrix}4n&4n+2&4n\\4n+2&4n&4n+2\\4n&4n+2&4n\end{pmatrix}$$
where $n=(a\oplus b)+1$. We refer to this as a $4n$-block.
\end{theorem}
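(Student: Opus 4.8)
The plan is to prove Theorem~\ref{boxy} by strong induction on $m+k$ (with $m,k\ge1$), computing $g_{m,k}$ directly from the mex rule; recall that $g_{m,k}$ was already shown to be independent of the cycle length, so it is a genuine function of $(m,k)$. The moves from $G_{m,k}$ fall into three families. Moves outside the two tails (on the cycle, at $A$, at $B$, or along edge $AB$) reach exactly the nim-values $\{0,1,2,3\}$: this is precisely the computation in the proof of Theorem~\ref{ocpt1} for the case where $B$ has odd degree, and here $B$ has degree $3$. Hence $g_{m,k}$ equals the least integer $\ge4$ not lying in the set $S$ of nim-values produced by moves \emph{inside} the tails. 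A move inside the $m$-tail yields a disjoint union $G_{j,k}\sqcup P$ with $P$ a path and $0\le j\le m-1$, of nim-value $g_{j,k}\oplus g(P)$, and symmetrically for the $k$-tail. Since a path on $\ell$ vertices has nim-value $0$, $1$, or $2$ according as $\ell=0$, $\ell$ is odd, or $\ell$ is even, for each $j$ there are exactly two such moves (one deleting an edge, one a vertex), which together contribute $\{g_{j,k}\oplus1,\,g_{j,k}\oplus2\}$ when $j\le m-2$ and $\{g_{j,k},\,g_{j,k}\oplus1\}$ when $j=m-1$.

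The boundary depths are harmless: by Theorem~\ref{ocpt1} (with $B$ of even degree) the positions $G_{0,k}$ and $G_{m,0}$ have nim-value $3$ or $0$ by parity, so after nim-summing with a path value they contribute only numbers $<4$ and can be dropped when computing the least missing value $\ge4$. For $j\ge1$ I would invoke the inductive hypothesis, writing $a=\lfloor(m-1)/3\rfloor$, $b=\lfloor(k-1)/3\rfloor$, so that $g_{j,k}$ is given by the block formula. The first key step is to show that each \emph{complete} lower triple of depths, $\{3a'+1,3a'+2,3a'+3\}$ with $a'<a$, contributes the entire nim-block $\{4n',4n'+1,4n'+2,4n'+3\}$ to $S$, where $n'=(a'\oplus b)+1$: because the triple contains both values $4n'$ and $4n'+2$, nim-summing them with the available path-values $\{1,2\}$ already exhausts all four residues, independently of the internal pattern. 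The symmetric statement for the $k$-tail blocks $\{4n'',\dots,4n''+3\}$ with $n''=(a\oplus b')+1$ for each $b'<b$.

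The conceptual heart is then a clean appeal to Nim. The indices blocked by complete lower triples are exactly $\{(a'\oplus b)+1:a'<a\}\cup\{(a\oplus b')+1:b'<b\}$. Since $a\oplus b=\operatorname{mex}\bigl(\{a'\oplus b:a'<a\}\cup\{a\oplus b':b'<b\}\bigr)$, this index set covers $\{1,\dots,n-1\}$ while omitting $n=(a\oplus b)+1$; therefore every value in $\{4,\dots,4n-1\}$ lies in $S$, no complete triple touches block $n$, and $g_{m,k}\in\{4n,4n+1,4n+2,4n+3\}$. Which residues of block $n$ actually occur is then decided only by the two \emph{partial} top triples (depths in block $a$ below $m$, and depths in block $b$ below $k$), read off from the block formula via the contributions computed above. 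Carrying out the resulting case analysis over the nine position-pairs $(p,q)$ with $p=m-3a$ and $q=k-3b$ would show that $4n$ is missing exactly when $p+q$ is even, while $4n$ and $4n+1$ are present but $4n+2$ is missing exactly when $p+q$ is odd, yielding precisely the $4n$-block pattern.

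I expect the main obstacle to be the bookkeeping in this last step rather than any conceptual difficulty. Two points demand care. First, one must confirm that the completeness of lower triples survives the boundary irregularity between the $\{1,2\}$ and $\{0,1\}$ path-value pairs; in particular, when $p=1$ the top depth $m-1$ lands in block $a-1$, and there the special $\{0,1\}$ pair is exactly what completes that triple. Second, one must check in all nine cases that the residues present in block $n$ never produce a spurious mex of $4n+1$ or $4n+3$ (equivalently, that residue $1$ appears whenever residue $0$ does, and that $\{4n,4n+1,4n+2\}$ never appears without $4n+3$). Both are finite mechanical verifications, but the argument should be organized so that the Nim-mex identity — which is what genuinely produces the factor $(a\oplus b)+1$ — is cleanly separated from the local parity computation distinguishing $4n$ from $4n+2$.
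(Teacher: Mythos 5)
Your proposal is correct and follows essentially the same route as the paper: both decompose the moves into off-tail moves yielding exactly $\{0,1,2,3\}$ and in-tail moves contributing $g_{j,k}\oplus\{1,2\}$ (or $\oplus\{0,1\}$ at the top depth), both observe that each complete lower triple of depths saturates its entire $4$-block because it contains both $4n'$ and $4n'+2$, and both identify the block-level pattern with the Nim-addition table. The differences are purely organizational — the paper first establishes the checkerboard shape of each block for an unspecified multiple of $4$ and then recognizes the block-level recurrence as Nim-addition, whereas you invoke the mex characterization of $\oplus$ up front and finish with the nine-case residue check.
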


\begin{proof}
Let $S_{m,k}$ be the set $\{g_{i,k}\oplus1|0\le i\le m-2\}\cup\{g_{i,k}\oplus2|0\le i\le m-2\}\cup\{g_{m,j}\oplus1|0\le j\le k-2\}\cup\{g_{m,j}\oplus2|0\le j\le k-2\}$. From the definition of the nim-value, we have
\begin{center}
$\displaystyle g_{m,k}=\text{mex}\{S_{m,k}\cup\{0,1,2,3,\hspace{0.1cm}g_{m-1,k},\hspace{0.1cm}g_{m-1,k}\oplus1,\hspace{0.1cm}g_{m,k-1},\hspace{0.1cm}g_{m,k-1}\oplus1\}\}$
\end{center}
where the inclusion of 0, 1, 2, and 3 is justified by Theorem~\ref{ocpt1}.

We first prove by induction that Theorem~\ref{boxy} holds for some $n$ and then prove that $n=(a\oplus b)+1$. For the induction, the base case $a=b=0$ is easily computed. Assume the box pattern holds for $a',b'$ whenever $\{a'<a$ and $b'=b\}$ or $\{b'<b$ and $a'=a\}$. First we prove that $g_{3a+1,3b+1}$ is a multiple of 4. Note that by the inductive hypothesis, all $g_{m',3b+1}$ and $g_{3a+1,k'}$ for $m'<3a+1$, $k'<3b+1$ are of the forms $4x$ and $4x+2$ for some integer $x$, and furthermore $4x$ is represented in $\{g_{m',3b+1}|m'<3a\}\cup\{g_{3a+1,k'}|k'<3b\}$ if and only if $4x+2$ is represented. So if $4x$ is represented, then $4x\oplus1=4x+1$, $4x\oplus2=4x+2$, $(4x+2)\oplus1=4x+3$, and $(4x+2)\oplus2=4x$ all appear in $S_{3a+1,3b+1}$. This also means that $g_{3a,3b+1}$, $g_{3a,3b+1}\oplus1$, $g_{3a+1,3b}$, and $g_{3a+1,3b}\oplus1$ are already in $S_{3a+1,3b+1}$ and do not place further constraints on $g_{3a+1,3b+1}$. Therefore, if $4y+i$ does not appear in $S_{3a+1,3b+1}$ for some $i\in\{1,2,3\}$, neither does $4y$. We conclude that the mex of $S_{3a+1,3b+1}\cup\{0,1,2,3\}$, and therefore $g_{3a+1,3b+1}$, is a multiple of 4. Specifically, it is the least positive multiple of 4 which does not appear earlier in the row or column.

Now suppose $g_{3a+1,3b+1}=4y$. By the inductive hypothesis, the set $\{g_{m',3b+1}|m'<3a+1\}$ is identical to the set $\{g_{m',3b+2}|m'<3a+1\}$. That is, $g_{3a+1,3b+2}$ has to satisfy the same constraints as $g_{3a+1,3b+1}$, and so must be at least $4y$, with the additional constraint that it cannot be $g_{3a+1,3b+1}=4y$ or $g_{3a+1,3b+1}\oplus1=4y+1$. We conclude that $g_{3a+1,3b+2}=4y+2$. Similarly, $g_{3a+2,3b+1}=4y+2$. Now $g_{3a+3,3b+1}$ must also satisfy the same constraints as $g_{3a+1,3b+1}$ with the additional conditions that it cannot be $g_{3a+1,3b+1}\oplus1=4y+1$, $g_{3a+1,3b+1}\oplus2=4y+2$, $g_{3a+2,3b+1}=4y+2$, or $g_{3a+2,3b+1}\oplus1=4y+3$; the minimum such value is $4y$. Therefore, $g_{3a+3,3b+1}=4y$ and similarly, $g_{3a+1,3b+3}=4y$. Also, by considering the constraints placed by $g_{3a+2,3b+1}$ and $g_{3a+1,3b+2}$ in a similar manner, we can show that $g_{3a+2,3b+2}=4y$. Next we can fill in $g_{3a+3,3b+2}=g_{3a+2,3b+3}=4y+2$ and finally $g_{3a+3,3b+3}=4y$. This proves the inductive step.

Note that the proof of the inductive hypothesis is indeed valid when $a=0$ or $b=0$, even though some of the sets referred to become empty sets, because our comparisons between constraint sets do not change.

\begin{table}[htbp]
\begin{center}
\begin{tabular}{|c|cc|cc||cc|cc|}\hline
\backslashbox{a}{b}&0&1&2&3&4&5&6&7\\ \hline
0&4&8&12&16&20&24&28&32\\
1&8&4&16&12&24&20&32&28\\ \hline
2&12&16&4&8&28&32&20&24\\
3&16&12&8&4&32&28&24&20\\ \hline\hline
4&20&24&28&32&4&8&12&16\\
5&24&20&32&28&8&4&16&12\\ \hline
6&28&32&20&24&12&16&4&8\\
7&32&28&24&20&16&12&8&4\\ \hline
\end{tabular}
\end{center}
\caption{Values of $g_{3a+1,3b+1}$ for small $a$ and $b$.}
\label{nimsum}
\end{table}

Since knowing the value of $g_{3a+1,3b+1}$ determines $g_{3a+i,3b+j}$ for $1\le i,j\le3$, we need only consider the pattern of the $4n$-blocks as $a$ and $b$ vary, as in Table~\ref{nimsum}. The value $g_{3a+1,3b+1}$ is just the least positive multiple of 4 which does not appear earlier in row $a$ or column $b$. Now consider a similar table in which the row number represents the size of a Nim-pile, the column number represents the size of a second Nim-pile, and the entry at a certain position represents the nim-value of the two piles combined. From a given two-pile position the possible moves are exactly those to positions with one pile unchanged and the other decreased, so by the mex rule, the entry at a position would simply be the mex of the earlier entries in its row and column. Thus, after dividing every entry in Table~\ref{nimsum} by 4 and subtracting 1, we see that the construction of Table~\ref{nimsum} is equivalent to the construction of a nim-addition table for two piles of size $a$ and $b$. But we know the nim-addition of $a$ and $b$ results in $a\oplus b$. Therefore, $g_{3a+1,3b+1}=4[(a\oplus b)+1]$, as desired.
\end{proof}

\begin{remark}
Upon division by $2$, the top row in Table~\ref{3by3} bears an intriguing similarity to the nim-sequence of the octal game $.3\overline{4}$, which begins with a pile of counters. On each turn, a player may either remove one counter from one pile or remove multiple counters from one pile and split the rest of the pile into two nonempty piles. The game has nim-sequence $101232454676...$ where the $i$th number refers to the nim-value of a pile of $i$ counters~\cite{Berlekamp}. While graph chomp on an odd-cycle pseudotree with long tails does have octal-game-like characteristics, we do not know of a way to make a concrete connection.
\end{remark}

Though Theorem~\ref{boxy} shows that $G_{m,k}$ is still somewhat well-behaved, more complicated odd-cycle pseudotrees seem to have more erratic nim-values. See Appendix A for interesting examples. We do not currently know a way to analyze them more generally.

\section{Summary and Future Work}
\label{fork}

We found a method of reducing simplicial complexes that applies to many game positions that had not previously been analyzed. We generalized previous results for complete graphs to all complete $n$-partite graphs and computed their nim-values. We also generalized previous results for forest graphs to all bipartite graphs and computed their nim-values. Finally, we analyzed some simple non-bipartite graphs.

There is much more analysis to be done for non-bipartite graphs. We have seen that even extremely simple non-bipartite graphs with a single cycle have very rich structures of nim-values. In Appendix A, we present conjectures for the nim-values of more general odd-cycle pseudotrees as well as the wheel graph $W_n$. 
Also, subset take-away in higher dimensions has been relatively unstudied. Analysis becomes especially difficult when higher-dimensional sets appear in a game position. Move options increase, and some moves yield positions that we are not yet able to analyze. For example, removing the interior of a $3$-set (which must appear in a simplicial complex which is not a graph) immediately results in an odd cycle, which is already too complicated with only vertices and edges in the picture.

Finally, we believe the connection between subset take-away and better-studied poset games such as regular Chomp has not yet been fully explored. It may be possible to use results for one to help understand results for the other. Considering the difficulty of regular Chomp and similar poset games, any such developments would be extremely valuable.

\section{Acknowledgments} 

We would like to thank Sira Sriswasdi of the University of Pennsylvania for creating computer programs to generate useful data, Dr. John Rickert of the Rose-Hulman Institute of Technology for his help with research writing and presentation, Dr. Tanya Khovanova of MIT for coordinating the RSI 2009 math projects and for helpful discussion, and Dr. Dan Christensen of the University of Western Ontario for helpful discussion. Finally, we would like to thank the S.D. Bechtel, Jr. Foundation, the Center for Excellence in Education, and MIT for sponsoring us in the summer of 2009.


\appendix

\section{Conjectures}

Let $G$ be a pseudotree, $C_G$ be its cycle, and $v(G)$ be the number of vertices in $G$. We assume $G$ \textit{is in simplest form} and $v(C_G)$ is odd.

\begin{conjecture}
Suppose vertex $A$ of $C_G$ has degree at least $3$ and all other vertices of $C_G$ have degree $2$. Take some leaf $X$ of $G$; let $G_0=G$ and $G_k$ be the graph constructed by adjoining a $k$-tail to vertex $X$ for $k\ge1$. Consider the sequence of nim-values $\{g_k\}_{k=0,1,...}$ where $g_k=g(G_k)$. We conjecture that $\{g_k\}$ eventually falls into one of two patterns. Either $\{g_k\}$ is eventually periodic with period $2$, alternating between values of the form $4n$ and $4n+3$ ($n$ integral), or it eventually matches some row of Table~\ref{3by3} (that is, it eventually looks something like $4,6,4,8,10,8,...$). In particular, no such pseudotree has a nim-value congruent to $1\pmod 4$.
\label{ocpt2}
\end{conjecture}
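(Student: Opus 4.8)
The plan is to treat $\{g_k\}$ as the solution of an octal-game-style mex recursion and to combine an invariant argument (pinning down which residues modulo $4$ can occur) with a growth-versus-periodicity dichotomy. First I would set up the recursion. Write $H=G_0=G$ and observe that every $G_k$ is $H$ with a path of $k$ extra vertices hung at the leaf $X$. Classify the moves from $G_k$ into \emph{tail moves} (deleting an edge or vertex on the attached path) and \emph{head moves} (deleting an element of $H$, possibly $X$ or its incident edge). A tail move at distance $j$ from $X$ produces $G_j\sqcup P$ for a detached path $P$; by Corollary~\ref{narcissus} that path contributes $1$ or $2$ by parity, so these options contribute exactly $\{g_j\oplus1,\,g_j\oplus2:0\le j<k\}$. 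A head move does not change $k$: it either disconnects the tail (leaving a smaller fixed pseudotree $\sqcup$ path, hence a value constant in $k$ up to a parity-flip $\oplus1/2$) or keeps the tail attached to a modified head $H'$ drawn from a \emph{finite} set, some of whose members are again odd-cycle pseudotrees of the conjecture's form and the rest of which are forests or forest-plus-cycle unions with closed-form values by Corollary~\ref{narcissus} and the parity facts preceding Theorem~\ref{fairest}. Thus the family of sequences indexed by the finitely many reachable head types satisfies a closed, mutually recursive mex system.

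Second I would establish that $g_k\not\equiv1\pmod4$ by strengthening the inductive hypothesis exactly as in the proof of Theorem~\ref{boxy}. The mechanism there is a \emph{linked-pair} phenomenon: the values feeding the mex occur so that $4x$ is represented if and only if $4x+2$ is, whence $4x\oplus1=4x+1$ and $(4x+2)\oplus1=4x+3$ are both present, filling residues $1$ and $3$ below the first absent multiple of $4$. I would carry, as part of the induction, that each coupled sequence's value set has this linked-pair structure, so the mex is always pushed off $1\bmod4$ and onto one of the allowed forms $4n$, $4n+2$, $4n+3$. Theorem~\ref{ocpt1} supplies the base facts that make the pairs start at $4$: that moves to $0,1,2,3$ are controlled and the position sits at nim-value $\ge4$ or in $\{0,3\}$.

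Third I would prove the eventual-shape dichotomy. Decompose $g_k=4n_k+r_k$ with $r_k\in\{0,2,3\}$. The period-$2$ branch is $n_k$ eventually constant with $r_k$ alternating between $0$ and $3$ (the residue behaviour of a stabilized forest-like position, as in the $\{0,3\}$ entries of Table~\ref{amazing}); the Table~\ref{3by3} branch is the $4n$-block growth already identified in Theorem~\ref{boxy}, where $r_k$ runs $0,2,0$ through each block while $n_k$ climbs by a nim-addition pattern. For the \emph{bounded} regime I would use the standard octal-game pigeonhole: once the values are bounded, a ``state'' consisting of the current values of all coupled sequences together with the last two terms of each (two, since tail moves reach back via $\oplus1,\oplus2$ and parities alternate with period $2$) lies in a finite set, so a state must recur and the sequences become periodic; the residues, being in $\{0,3\}$ and tied to the parity of $v(G_k)$, which flips as $k$ increases by $1$, then force the period to be exactly $2$.

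The hard part will be the transition between the two regimes and the proof that no third behaviour occurs, because in the \emph{unbounded} regime the pigeonhole argument is simply unavailable. Here I would follow the device of Theorem~\ref{boxy}: show that when values grow, the coupled system forces the $4n_k$ part to obey the mex-of-row-and-column rule of a nim-addition table, so the growth is rigidly the Table~\ref{3by3} pattern rather than anything erratic. Controlling the coupling of the several head-type sequences while values are unbounded --- in particular ruling out mixed behaviour in which one coupled sequence grows and drags another into an unlisted pattern --- is the genuine obstacle, and it is presumably why the statement remains a conjecture. A complete argument would require a clean invariant separating ``the tail has overwhelmed the head'' (growth) from ``the head pins the value'' (period $2$), uniformly over all admissible heads $H$.
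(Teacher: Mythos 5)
The statement you are addressing is Conjecture~\ref{ocpt2}: the paper offers no proof of it, and the authors state at the end of Section~\ref{grah} that more complicated odd-cycle pseudotrees ``seem to have more erratic nim-values'' and that they ``do not currently know a way to analyze them more generally.'' So there is no argument in the paper to compare yours against, and your proposal must stand on its own. It does not: by your own admission the decisive step --- ruling out any behaviour other than the two listed patterns, and in particular controlling the unbounded regime where pigeonhole arguments are unavailable --- is left as ``the genuine obstacle.'' A plan that names its missing lemma is not a proof, and the missing lemma here is essentially the entire content of the conjecture. The mod-$4$ claim likewise rests on transplanting the linked-pair invariant from the proof of Theorem~\ref{boxy}, but that invariant was verified there only for the very special head $G_{m,k}$ (one degree-$3$ cycle vertex, two tails meeting at $B$), where every relevant option value has the form $4x$ or $4x+2$ and the two always co-occur. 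You give no reason the pairing survives when head moves inject arbitrary forest and forest-plus-cycle values into the option sets; indeed Table~\ref{7-11-15} shows that for other heads the values $4n+3$ do occur, so the value sets feeding the mex are no longer confined to $\{4x,4x+2\}$ and the co-occurrence argument as stated breaks.

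There is also a technical flaw in the part you do sketch. Your pigeonhole for the bounded regime keys the recurring state on ``the last two terms of each'' coupled sequence, but a tail move from $G_k$ reaches back to $G_j$ for \emph{every} $j\le k-2$ (contributing $g_j\oplus1$ and $g_j\oplus2$), so the option set depends on the entire prefix of the sequence, not on a bounded window; this is not a finite octal game. The repair is to note that the accumulated set $\{g_j\oplus1,\,g_j\oplus2 : j\le k-2\}$ is monotone in $k$ and therefore stabilizes whenever the values are bounded, after which $g_k$ depends only on $g_{k-1}$, the parity of $k$, and finitely many head-move values, and a finite-state recurrence argument applies. Even with that repair, you would still need to show that boundedness forces the residues into $\{0,3\}$ alternating with the parity of $v(G_k)$, that unboundedness forces exactly the Theorem~\ref{boxy} growth, and that one of the two alternatives always occurs; none of these is established, so the statement remains, as in the paper, a conjecture.
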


\begin{remark}
Graphs exhibiting the $4,6,4,8,10,8,...$ pattern as well as the $4n,4n+3,4n,4n+3,...$ pattern for $n=1,2,3$ are pictured in Table~\ref{7-11-15}. The $n=0$ case is covered by Theorem~\ref{ocpt1}.
\end{remark}

\begin{table}[htbp]
\begin{center}
\begin{tabular}{cccc}
\includegraphics[scale=0.8]{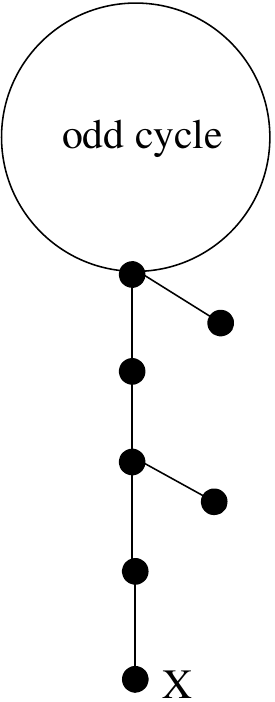} & \includegraphics[scale=0.8]{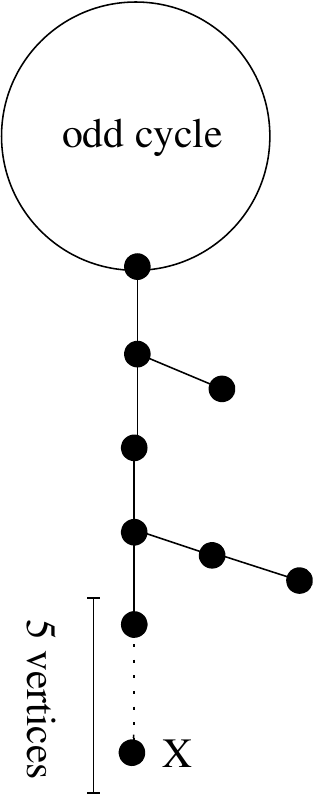} & \includegraphics[scale=0.8]{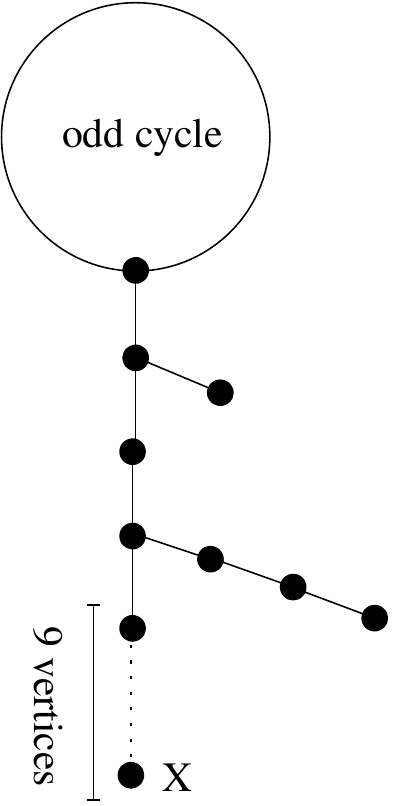} & \includegraphics[scale=0.8]{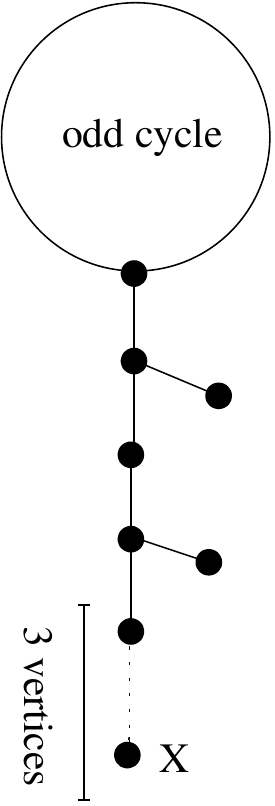}\\
nim-value 7 & nim-value 11 & nim-value 15 & nim-value 8\\
A & B & C & D
\end{tabular}
\caption{Adjoining tails of increasing length to the vertex labeled $X$ in graphs A, B, C, and D result respectively in the nim-sequences $4,7,4,7,...$; $8,11,8,11...$; $12,15,12,15...$; and $10,8,12,14,12,16,18,16,...$.}
\label{7-11-15}
\end{center}
\end{table}

\begin{conjecture}
Suppose there are two or more vertices of $C_G$ with degree at least $3$. Then $g(G)$ is $3$ if $|v(G)|$ is odd and $0$ if $|v(G)|$ is even.
\label{ocpt4}
\end{conjecture}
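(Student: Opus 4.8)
The plan is to prove the statement by strong induction on $v(G)$ within the class of odd-cycle pseudotrees that are in simplest form and have at least two vertices of degree $\ge 3$ on the cycle. The starting observation is that a connected pseudotree satisfies $e(G)=v(G)$, so the claimed value ($3$ for $v(G)$ odd, $0$ for $v(G)$ even) is exactly the value Table~\ref{amazing} would assign if $G$ were bipartite; the content of the statement is that two or more branch vertices force an odd cycle to ``behave bipartitely.'' Throughout I would freely use Corollary~\ref{narcissus} for forests, Theorem~\ref{ocpt1} for single-branch pseudotrees, and the inductive hypothesis for smaller members of the class. Since removing any vertex or edge from an odd-cycle pseudotree never creates a new cycle, every position reachable in one move is a disjoint union of a forest together with at most one odd-cycle pseudotree, all of whose pieces fall under these results.

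I would first dispose of the ``easy'' moves. Deleting a cycle edge or a degree-$2$ cycle vertex leaves a single tree, whose nim-value is $1$ or $2$ by Corollary~\ref{narcissus} and in particular is never $0$ or $3$. Deleting a branch vertex $A$ of the cycle destroys the cycle and detaches the $\deg(A)-2$ trees hanging from it, leaving a forest on $v(G)-1$ vertices with $\deg(A)-1$ components; a direct parity check against Corollary~\ref{narcissus} shows this value is never the forbidden target. Thus the only moves that can possibly reach the target value are those deleting a non-cycle edge or non-cycle vertex, each of which splits $G$ into a disjoint union $H\sqcup F$ of a strictly smaller odd-cycle pseudotree $H$ (still containing the whole cycle) and a forest $F$.

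The heart of the argument is then a nim-sum/parity computation on $H\sqcup F$. When $H$ retains at least two branch vertices, the inductive hypothesis gives $g(H)\in\{0,3\}$ with $g(H)=0\iff v(H)$ even, and Corollary~\ref{narcissus} pins $g(F)$ to the parities of $v(F)$ and the number of components of $F$. Because the total number of remaining vertices has a fixed parity, a short case analysis (identical in spirit to the proof of Theorem~\ref{ocpt1}) shows $g(H)\oplus g(F)$ can equal neither $0$ (when $v(G)$ is even) nor $3$ (when $v(G)$ is odd); when $F$ is a single tree this is even cleaner, since then $g(F)\in\{1,2\}$ is disjoint from $g(H)\in\{0,3\}$. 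Conversely, for the $v(G)$-odd case I would exhibit explicit moves realizing the values $0$, $1$, and $2$ (a cycle edge for $1$, a suitable degree-$2$ cycle vertex or odd-degree branch vertex for $2$ and $0$), so that the mex is exactly $3$; the degenerate configurations where these particular moves are unavailable---most notably when every cycle vertex is a branch vertex---I would verify directly as additional base cases.

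The main obstacle is the case where the move de-branches $G$ all the way down, so that $H$ is a \emph{single-branch} pseudotree. If $g(H)\ge 4$ there is no difficulty: since $g(F)\le 3$ has no bit of value $\ge 4$, the nim-sum $g(H)\oplus g(F)$ agrees with $g(H)$ in the bits of value $\ge 4$ and is therefore itself $\ge 4$, avoiding both forbidden targets automatically. The trouble is that the surviving branch vertex may have degree $\ge 4$ (two or more non-isomorphic hanging trees, which simplest form does not forbid), a regime \emph{not} covered by Theorem~\ref{ocpt1}, which treats only branch vertices of degree exactly $3$. To close the induction I would need the following special case of Conjecture~\ref{ocpt2}: every single-branch odd-cycle pseudotree $H$ in simplest form has $g(H)\notin\{1,2\}$, and when $g(H)\in\{0,3\}$ it satisfies $g(H)=3\iff v(H)$ odd. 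Theorem~\ref{ocpt1} supplies exactly this for degree-$3$ branch vertices, and the computations behind Theorem~\ref{boxy} confirm it for the tailed family $G_{m,k}$, but a proof valid for arbitrary single-branch pseudotrees is precisely the erratic, octal-game-like behavior that makes Conjecture~\ref{ocpt2} open. I expect establishing this bound---most plausibly by folding it into the same induction as an auxiliary invariant ``$g\notin\{1,2\}$''---to be the crux of the entire argument.
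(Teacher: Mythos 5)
The statement you are attempting is Conjecture~\ref{ocpt4}; the paper offers no proof of it (the accompanying remark even calls it ``somewhat optimistic''), so there is no argument of record to compare yours against. Judged on its own terms, your proposal has a genuine gap, and to your credit you name it yourself: the induction cannot be closed without knowing that every single-branch odd-cycle pseudotree $H$ in simplest form has $g(H)\notin\{1,2\}$, and satisfies $g(H)=3$ if and only if $v(H)$ is odd whenever $g(H)\le 3$. Theorem~\ref{ocpt1} supplies this only when the lone branch vertex has degree exactly $3$, and a move that strips one of the two branch vertices of $G$ down to degree $2$ can perfectly well leave behind a single-branch pseudotree whose surviving branch vertex has degree $4$ or more, or whose hanging tree puts it in the erratic regime that Conjecture~\ref{ocpt2} is about. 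Without that auxiliary invariant --- itself an open conjecture --- the mex computation for the ``de-branching'' moves cannot be completed, so what you have is a proof strategy rather than a proof; this missing piece is precisely why the statement is a conjecture in the paper.

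Two smaller points. First, your plan to ``verify directly as additional base cases'' the configurations in which the moves realizing nim-values $0$, $1$, $2$ are unavailable (for instance, every cycle vertex a branch vertex) is not a finite check: such configurations exist for every odd cycle length and every choice of hanging trees, so they need their own argument rather than enumeration. Second, the pseudotree $H$ left after a move need not be in simplest form even when $G$ is, so before invoking the inductive hypothesis or Theorem~\ref{ocpt1} on $H$ you must first reduce it via Theorem~\ref{sym} and confirm the reduced graph still meets the relevant hypotheses (it may collapse to a bare cycle or shed a branch vertex); this is handleable but is a step your outline skips. The parts of your argument that do go through --- the parity analysis of cycle moves, of deleting a branch vertex, and of the $H\sqcup F$ decomposition when $H$ retains two branch vertices --- correctly mirror the proof of Theorem~\ref{ocpt1} and are sound.
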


\begin{remark}
While it seems plausible that attaching trees to multiple vertices of $C_G$ has a ``shielding'' effect which prevents the accumulation of ugly nim-values, and small examples do turn out this way, this conjecture is also somewhat optimistic.
\end{remark}

\begin{conjecture}
\label{wheel}
The wheel graph $W_n$, consisting of an $n$-cycle and a center vertex connected to each vertex in the cycle as in Figure~\ref{W6}, has nim-value $1$ for all $n\ge3$.
\end{conjecture}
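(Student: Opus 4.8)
The plan is to reduce the claim to the \emph{mex} criterion: $g(W_n)=1$ holds if and only if (a) some move from $W_n$ reaches a position of nim-value $0$, and (b) no move reaches a position of nim-value $1$. Part (a) is immediate---removing the hub $O$ deletes every spoke and leaves exactly the rim cycle $C_n$, which has nim-value $0$ since every move on a cycle produces a path. For part (b) I would enumerate the remaining move types from $W_n$: removing a spoke $\{O,v_i\}$ yields a \emph{wheel minus a spoke}; removing a rim edge $\{v_i,v_{i+1}\}$ breaks the rim into a path and leaves the hub joined to all $n$ rim vertices, i.e.\ a fan $F_n$; and removing a rim vertex $v_i$ deletes its two rim edges and its spoke, leaving the fan $F_{n-1}$. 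Thus part (b) amounts to showing that no fan and no wheel-minus-a-spoke has nim-value $1$.

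For \emph{even} $n$ the whole statement follows cleanly from Theorem~\ref{sym}. Take the reflection $\tau$ fixing the hub $O$ and two antipodal rim vertices $v_1$ and $v_{1+n/2}$ while pairing $v_{1+j}\leftrightarrow v_{1-j}$. Since $n$ is even this axis passes through two vertices rather than bisecting an edge, so no edge has its endpoints interchanged and $\tau$ is a valid involution for Theorem~\ref{sym}. A direct check shows the only fixed edges are the two spokes $\{O,v_1\}$ and $\{O,v_{1+n/2}\}$, so $W_n^{\tau}$ is the path $v_1 - O - v_{1+n/2}$ on three vertices, which has nim-value $1$ by Corollary~\ref{narcissus}. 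Hence $g(W_n)=1$. The smallest odd case $n=3$ is also settled immediately, since $W_3=K_4$ has nim-value $4\bmod 3=1$ by Lemma~\ref{complete}.

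The hard part is \emph{odd} $n\ge 5$, where the symmetry method breaks down: for $n\ge 4$ the automorphism group of $W_n$ is the dihedral group $D_n$ with the hub fixed, and when $n$ is odd every involution in $D_n$ is a reflection through one rim vertex and the midpoint of the opposite rim edge, so it interchanges the endpoints of that edge and fails the hypothesis of Theorem~\ref{sym}. Lacking a usable global symmetry, I would instead attempt a single strong induction carried out simultaneously over the family consisting of wheels, fans $F_n$, and wheels with some spokes deleted, proving a closed-form nim-value for each. Some members of this family are still reachable by symmetry and give encouraging partial data: an \emph{odd} fan $F_n$ reduces, via its reflection through the middle rim vertex, to the single edge $O - v_{(n+1)/2}$ and so has nim-value $2\neq 1$, which is exactly what part (b) needs from the $F_n$ branch.

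The main obstacle is controlling the nim-values of the wheel-minus-a-spoke positions and of the even fans, neither of which admits a nontrivial involution whose fixed set is a graph (removing a spoke leaves the hub and one rim vertex distinguished, collapsing $D_n$ to a single reflection that again bisects an edge for odd rim length). These positions proliferate under further spoke deletions, so a successful argument would have to isolate an invariant for the entire family---analogous to the parity tables of Section~\ref{bipart} or the $4n$-block pattern of Theorem~\ref{boxy}---and verify by the mex rule that no position in the family ever takes the value $1$ except where intended. Establishing and propagating such an invariant through the many move types is precisely the step I expect to be difficult, and it is the reason the statement is recorded here only as a conjecture.
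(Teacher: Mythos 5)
The statement you are addressing is recorded in the paper as a conjecture, and the paper itself offers no proof of it --- only a remark establishing that $W_n$ is an N-position (delete the hub to leave a cycle), that the even case follows from Theorem~\ref{sym} via the reflection fixing the hub and a diametrically opposite pair of rim vertices (fixed point set a $3$-vertex path, nim-value $1$), and that $n=3,5,7$ were checked by hand. Your treatment of the even case is exactly the paper's argument, and your handling of $n=3$ as $K_4$ via Lemma~\ref{complete} is correct. Your additional observations are sound and go slightly beyond the paper's remark: the move enumeration correctly reduces part (b) to controlling fans and wheels-minus-a-spoke, the odd fan $F_n$ does reduce by its reflection to a single edge of nim-value $2$, and you correctly diagnose why no involution in $D_n$ with odd $n$ satisfies the hypothesis of Theorem~\ref{sym}. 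But the essential content of the conjecture --- odd $n\ge 9$ --- remains unproved in your proposal, as you acknowledge; in particular the even fans $F_{n-1}$ and the wheel-minus-a-spoke positions are exactly the uncontrolled cases, and no invariant for them is supplied. So your write-up is an honest and accurate account that matches the paper's own partial justification without resolving the conjecture; it should not be presented as a proof, and you have correctly not done so.
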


\begin{figure}[htbp]
\begin{center}
\includegraphics{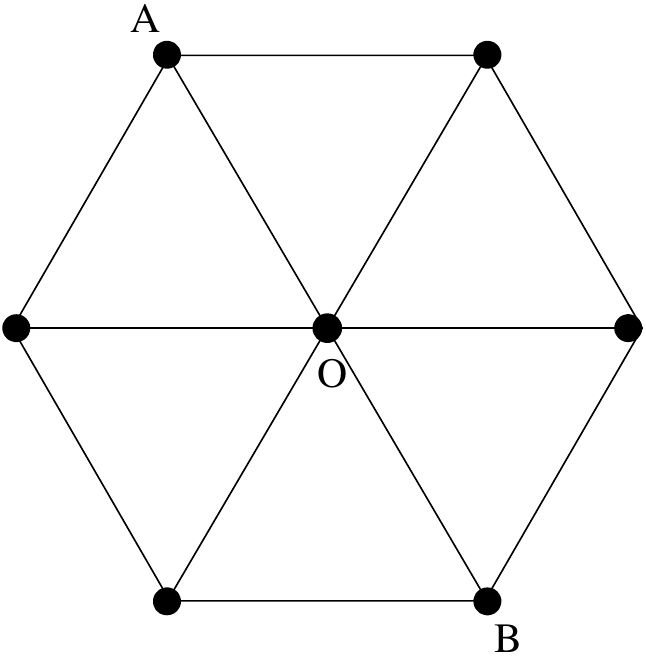}
\end{center}
\caption{The wheel graph $W_6$.}
\label{W6}
\end{figure}

\begin{remark}
Note that $W_n$ is an N-position for all $n\ge3$ since taking the center vertex results in a cycle. Also, Conjecture~\ref{wheel} is certainly true for even $n$, as then we can label the center vertex $O$ and any diametrically opposite pair of vertices $A$ and $B$ as in Figure~\ref{W6} and consider the involution which takes $A$, $B$, $O$ to themselves and all other vertices to the diametrically opposite one. Then the fixed point set is the path with $3$ vertices, which has nim-value $1$. It is also not difficult to verify the conjecture for $n=3,5,7$ by hand. For odd $n$ greater than $7$ calculations are no longer manageable.
\end{remark}

\section{Odd-cycle Hairballs}

\begin{definition}
We refer to a pseudotree $G$ as a \emph{hairball} if no vertex not in $C_G$ has degree greater than $2$. See Figure~\ref{hairball} for an example.
\end{definition}

Like $G_{m,k}$, the hairballs are a relatively well-behaved class of pseudotree, and the following theorem completely classifies their nim-values.

\begin{theorem} 
\label{HAIR}
Suppose $G$ is a hairball with $v(C_G)$ odd which is in simplest form and is not just a cycle. Then $g(G)$ is determined by the following rules:
\begin{enumerate}
\item If $v(G)$ is even, all vertices in $C_G$ other than vertex $A$ have degree $2$, and vertex $A$ has degree $4$ and is connected to a $k$-tail and a $(k+1)$-tail for some nonnegative $k$, then $g(G)=4$.
\item If $v(G)$ is even and $G$ is not in the above form, then $g(G)=0$.
\item If $v(G)$ is odd, then $g(G)=3$.
\end{enumerate}
\end{theorem}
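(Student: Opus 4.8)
The plan is to prove Theorem~\ref{HAIR} by strong induction on $v(G)$, mirroring the structure of the proof of Theorem~\ref{ocpt1} but accounting for tails attached at many cycle vertices. The first step is to pin down what ``simplest form'' means for a hairball. Since any involution $\tau$ is a graph automorphism, it carries the unique cycle $C_G$ to itself; the only involutory automorphisms of an odd cycle are reflections, each of which fixes an edge while swapping its endpoints and is therefore forbidden by the hypothesis of Theorem~\ref{sym}. Hence every valid involution fixes $C_G$ pointwise and can only permute the tails hanging off a \emph{common} cycle vertex, pairing two tails of equal length. Consequently a hairball is in simplest form exactly when the tails at each cycle vertex have pairwise distinct lengths, and reducing a pair of equal tails deletes an even number of vertices, so it preserves the parity of $v(G)$.

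Next I would classify the moves from $G$ into three types: (A) deleting a cycle edge, which yields a single tree; (B) deleting a cycle vertex, which yields a forest whose component count is $1$ plus the number of tails at that vertex; and (C) deleting a tail edge or tail vertex, which yields the disjoint union of a path $P_s$ (possibly empty) and a smaller hairball $G'$ on the same odd cycle. For types A and B the nim-value follows immediately from Corollary~\ref{narcissus}. For type C I would first reduce $G'$ to simplest form using Theorem~\ref{sym} (this never destroys the cycle, so the reduced graph is either a genuine simplest-form hairball or the bare cycle) and then invoke the inductive hypothesis. The linchpin is that the inductive hypothesis forces every smaller odd-cycle hairball to have nim-value in $\{0,3,4\}$ and never $1$ or $2$; xoring such a value with the path-values $g(P_s)\in\{0,1,2\}$ lets me compute the reachable set.

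With this in hand the impossibility half of each case is a short parity computation. For $v(G)$ odd I would check that types A, B, and C can never produce nim-value $3$; for $v(G)$ even and non-special I would check that no move produces nim-value $0$, whence $g(G)=0$ by the mex rule. The decisive observation, which separates Case~2 from Case~1, is that the \emph{only} way an even hairball can move to a nim-value-$0$ position is a leaf deletion ($s=0$) that creates a pair of equal-length tails collapsing all the way to the bare cycle; tracing when such a collapse is possible shows it happens precisely when $A$ carries two consecutive-length tails (the $k$- and $(k+1)$-tail of Case~1, with $k=0$ degenerating to a single pendant) and all other cycle vertices have degree $2$. In that special form I would then show the reachable set is exactly $\{0,1,2,3\}$: reaching $0$ via the collapsing leaf deletion, $2$ via a cycle edge, and $1$ and $3$ via deleting $A$ or a pendant edge, while no move can reach $4$ because that again would require a leaf deletion to a value-$4$ hairball, which the special structure forbids.

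I expect the main obstacle to be the reachability (lower-bound) direction of Cases~1 and~3 rather than the impossibility direction. When every cycle vertex happens to carry the same parity of tail-count, the easy type-A and type-B moves do not by themselves realize all of $\{0,1,2\}$ (resp.\ $\{0,1,2,3\}$), and one must produce the missing value with a carefully chosen type-C move, keeping track of how the split path's parity interacts with the residual hairball's nim-value. Handling this uniformly---together with the boundary bookkeeping around the bare cycle and the degenerate $k=0$ pendant case---is where the argument requires the most care; the rest reduces to the parity tables already implicit in Corollary~\ref{narcissus} and the $\{0,3,4\}$ dichotomy supplied by the inductive hypothesis.
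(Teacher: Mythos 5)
Your overall architecture is the paper's: strong induction, the same three-way classification of moves (cycle edge, cycle vertex, tail move), the key observation that the inductive hypothesis confines every smaller odd-cycle hairball to nim-values $\{0,3,4\}$ so that xoring with a path value $1$ or $2$ can never produce $0$ or $3$, and the identification of Case~1 as precisely the graphs from which a leaf deletion collapses to the bare cycle. Your explicit justification that a valid involution must fix the odd cycle pointwise (a reflection would swap the endpoints of an edge, violating the hypothesis of Theorem~\ref{sym}), so that ``simplest form'' means the tails at each cycle vertex have pairwise distinct lengths, is a worthwhile addition that the paper leaves implicit, and your impossibility arguments (no move to $3$ when $v(G)$ is odd; no move to $0$ in Case~2) are sound.

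However, there is a genuine gap exactly where you say you ``expect the main obstacle'' to be: the reachability half of Case~3 is asserted to need care but is never carried out, and it is not routine. Two specific sub-arguments are missing. First, to reach nim-value $0$ from an odd-$v(G)$ hairball you delete a leaf, but this fails precisely when the result is of the form in Case~1 (nim-value $4$); one must enumerate the configurations in which \emph{every} leaf deletion produces such a graph --- the paper finds exactly three (a $k$-tail and $(k+2)$-tail at one vertex; four tails of lengths $m,m+1,k,k+1$ at one vertex; two consecutive-length pairs at two different vertices) --- and show each still admits a move to $0$, namely deleting the leaf of the second-longest tail. Second, to reach nim-value $2$ the natural move is deleting a degree-$2$ cycle vertex, which may not exist; the paper's fix is a parity argument showing some tail has length at least $2$, then deleting its second-to-last vertex to leave an isolated vertex plus a hairball that cannot reduce to the bare cycle, giving $1\oplus 3=2$. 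Neither construction follows from the parity tables or the $\{0,3,4\}$ dichotomy alone, so as written your proposal establishes only the upper bounds on $g(G)$, not the claimed equalities. (A small additional slip: in Case~1 the value $3$ is reached by deleting the \emph{leaf vertex} of the $k$-tail, not a pendant edge --- deleting the pendant edge leaves an isolated vertex whose nim-value $1$ xors the result to $2$ or $1$.)
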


\begin{figure}[htbp]
\begin{center}
\includegraphics{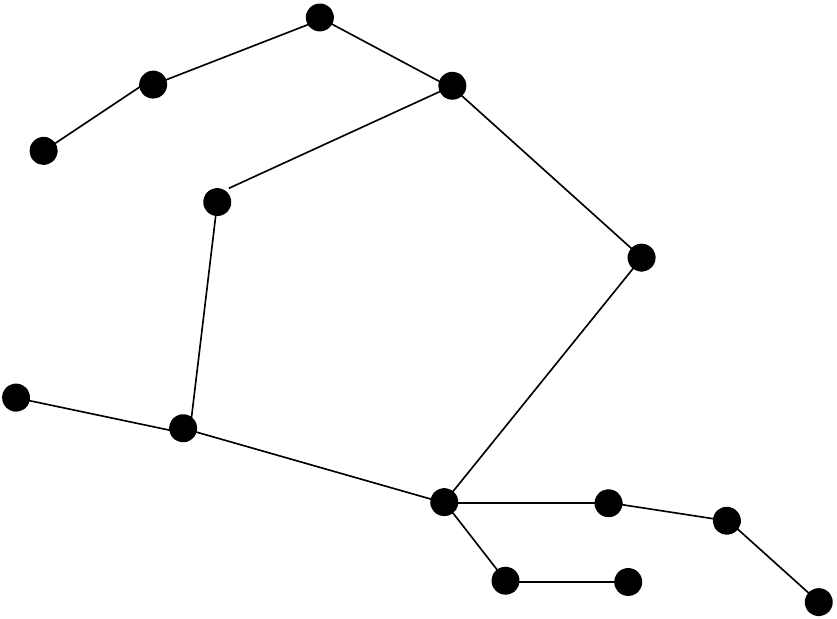}
\end{center}
\caption{An example of a hairball graph.}
\label{hairball}
\end{figure}

\begin{proof}
We prove this by strong induction. It is easy to verify three base cases, shown in Figure~\ref{basecase}:
\begin{enumerate}
\item $G$ consists of $C_G$ and a leaf attached to a vertex of $C_G$,
\item $G$ consists of $C_G$ and a $2$-tail attached to a vertex of $C_G$, and
\item $G$ consists of $C_G$ a leaf attached to each of two vertices of $C_G$.
\end{enumerate}

\begin{figure}[htbp]
\begin{center}
\includegraphics[width=\textwidth]{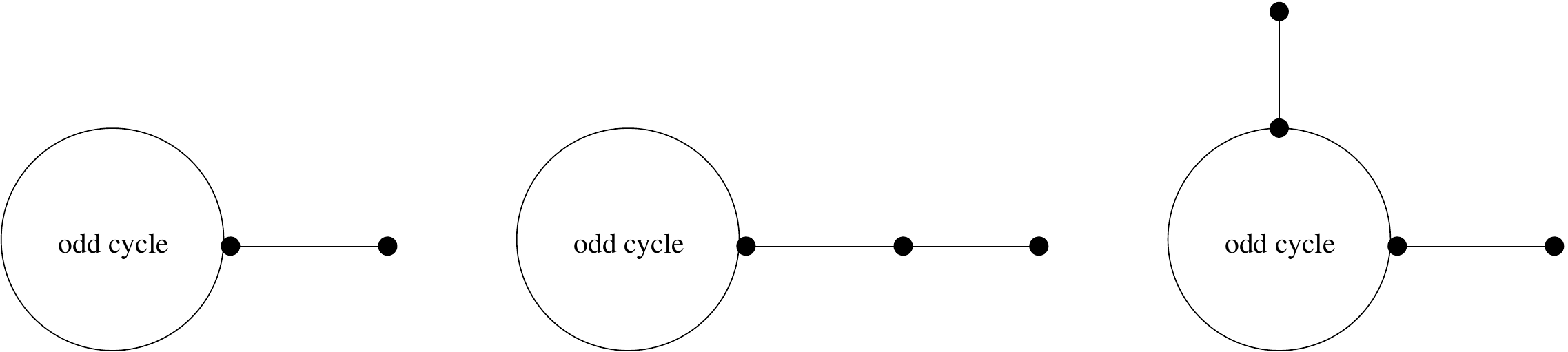}
\end{center}
\caption{Base case configurations.}
\label{basecase}
\end{figure}

Given a graph $G$ satisfying the hypotheses of Theorem~\ref{HAIR} which is not one of the base cases, assume the statement is true for all of its subgraphs.

First suppose $G$ is of the form described in part 1 of Theorem~\ref{HAIR}. $G$ can be moved to a position with nim-value $0$ by deleting the leaf of the $(k+1)$-tail, $1$ by deleting an edge of $C_G$, $2$ by deleting a vertex of $C_G$, and $3$ by deleting the leaf of the $k$-tail and the inductive hypothesis; another position of the same form, which would have nim-value $4$, is not achievable. Therefore $G$ has nim-value $4$.

Next suppose $G$ is of the form described in part 2 of Theorem~\ref{HAIR}. We must show that $G$ cannot be moved to a position with nim-value $0$. Deleting an edge of $C_G$ gives $2$ and deleting a vertex of $C_G$ gives a bipartite graph with an odd number of vertices. Deleting a leaf does not result in a position with nim-value $0$ since $G$ is not of the form described in part 1. Deleting any other edge or vertex results in the combination of a smaller hairball (possible nim-values $0,3,4$ by the inductive hypothesis) and a path graph ($1$ or $2$), which is also not $0$.

Finally, suppose $G$ is of the form described in part 3 of Theorem~\ref{HAIR}. We first need to find moves from $G$ to positions with nim-values $0$, $1$, and $2$. $G$ can be moved to $0$ by deleting any leaf, unless $G$ falls into one of the following categories, shown in Figure~\ref{special} with $k=2$, $m=0$:
\begin{enumerate}
\item $G$ consists of $C_G$ and a $k$-tail and a $(k+2)$-tail attached to one vertex of $C_G$.
\item $G$ consists of $C_G$ and an $m$-tail, an $(m+1)$-tail, a $k$-tail, and a $(k+1)$-tail, all of distinct lengths, attached to one vertex of $C_G$.
\item $G$ consists of $C_G$, an $m$-tail and an $(m+1)$-tail attached to one vertex of $C_G$, and a $k$-tail and a $(k+1)$-tail attached to another vertex of $C_G$.
\end{enumerate}
Each of these positions can be moved to a position with nim-value $0$ by deleting the leaf of the second-longest tail, since the result cannot be of the form described in part 1 of Theorem~\ref{HAIR}. 

\begin{figure}[htbp]
\begin{center}
\includegraphics{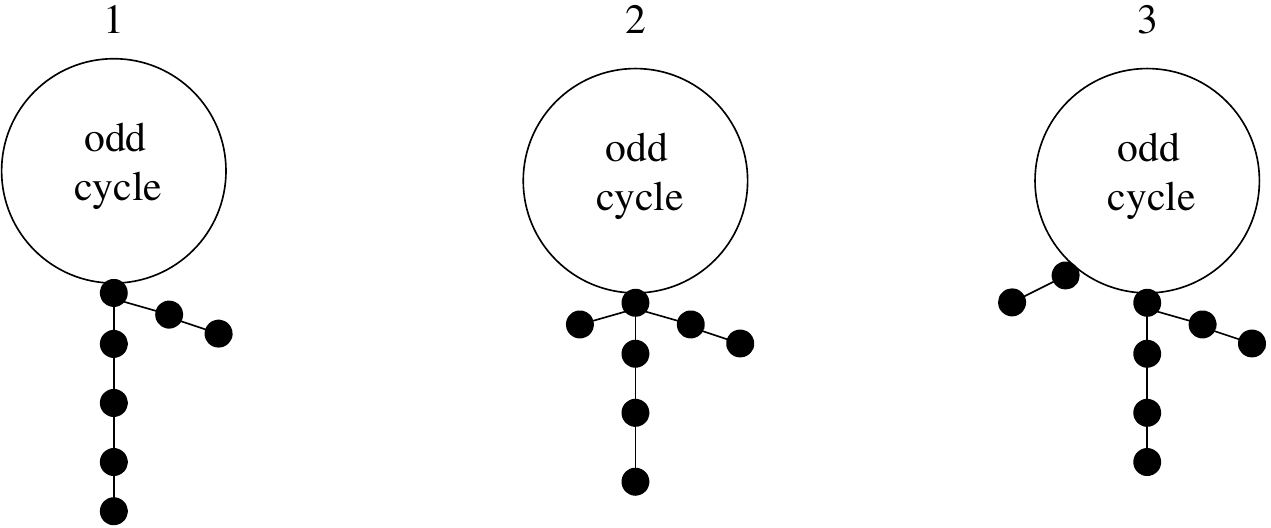}
\end{center}
\caption{The three special cases with $k=2$ and $m=0$.}
\label{special}
\end{figure}

$G$ can be moved to $1$ by deleting an edge in $C_G$. $G$ can be moved to a position with nim-value $2$ by deleting any vertex in $C_G$ with even degree if one exists. If there are no such vertices, $G$ must contain a tail of length at least 2; otherwise all vertices outside the cycle would be leaves, so there would be an odd number of vertices outside the cycle and an even total number of vertices. Then take the second-to-last vertex in that tail. The result is the disjoint union of a vertex and a pseudotree which cannot reduce to the odd cycle since all but one vertex in $C_G$ are connected to at least one tail, and thus has nim-value $1\oplus3=2$. 

Finally we make sure that $G$ cannot be moved to $3$. All moves in $C_G$ go to $0$, $1$, or $2$. Deleting a leaf of $G$ gives $0$ or $4$ by the inductive hypothesis. Deleting any other edge or vertex outside $C_G$ gives the combination of a smaller hairball (possible nim-values $0,3,4$) and a path graph ($1$ and $2$), which is also not $3$. This completes the induction. 
\end{proof}

\begin{corollary} 
Suppose graph $G$ has $V$ vertices and $V+1$ edges, where $V-1$ of the vertices have degree $2$ and the remaining vertex $A$ has degree $4$, as in Figure~\ref{cor1}. Then $g(G)=1$.
\end{corollary}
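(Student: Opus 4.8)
The plan is to first pin down the structure of $G$. Since every vertex other than $A$ has degree $2$ and $A$ has degree $4$, $G$ has no vertex of degree $1$; hence each connected component is either a cycle (if it avoids $A$) or the component containing $A$. In the latter, following the degree-$2$ chains out of $A$ shows that each such chain returns to $A$, the only vertex of degree exceeding $2$, so the four edges at $A$ pair up into two cycles $C_1$ and $C_2$ meeting only at $A$ --- a ``figure-eight.'' Each cycle component has nim-value $0$ and, by the xor rule, does not affect $g(G)$, so I may assume $G=C_1\cup C_2$ is exactly this figure-eight.

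The core of the argument is to evaluate $g(G)$ by the mex rule, showing that every move avoids nim-values $1$ and $2$ while some move reaches $0$. I would classify the moves. Deleting $A$ opens both cycles into two disjoint paths, a forest with two components, whose nim-value is $0$ or $3$ by Corollary~\ref{narcissus}. Deleting any edge, or any degree-$2$ vertex, disrupts exactly one of the two cycles (say $C_1$), turning the opened arc or arcs into tails hanging at $A$ while leaving $C_2$ intact; the result is therefore a pseudotree whose unique cycle is $C_2$. If $C_2$ is even this is an even-cycle pseudotree, with nim-value $0$ or $3$ by the bipartite count of Theorem~\ref{fairest}; if $C_2$ is odd it is an odd-cycle hairball, with nim-value in $\{0,3,4\}$ by Theorem~\ref{HAIR}. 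Thus every position reachable from $G$ has nim-value in $\{0,3,4\}$, so no move produces nim-value $1$ or $2$.

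It remains to exhibit a move to a position of nim-value $0$. If $v(G)$ is odd, deleting $A$ leaves a two-component forest on $v(G)-1$ (even) vertices, of nim-value $0$ by Corollary~\ref{narcissus}. If $v(G)$ is even, then $C_1$ and $C_2$ have lengths of opposite parity, so exactly one of them, say $C_2$, is odd; deleting an edge of $C_2$ incident to $A$ leaves the even cycle $C_1$ together with a tail, an even-cycle pseudotree on $v(G)$ (even) vertices, of nim-value $0$ by Theorem~\ref{fairest}. Hence the set $S$ of reachable nim-values satisfies $S\subseteq\{0,3,4\}$ and $0\in S$, giving $g(G)=\text{mex}(S)=1$.

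The point requiring most care is the application of Theorem~\ref{HAIR}, which is stated only for hairballs in simplest form. A reachable position may fail to be in simplest form---for instance, two opened arcs of equal length form symmetric tails---but reducing to simplest form preserves the nim-value by Theorem~\ref{sym}. I would check that such a reduction keeps an odd-cycle hairball a hairball: any admissible involution must fix the odd cycle pointwise, since the only involutive automorphism of an odd cycle that moves a vertex is a reflection, and this swaps the two endpoints of the antipodal edge, which is forbidden by the hypothesis of Theorem~\ref{sym}. So the involution acts only on the tails, and the simplest form is again an odd-cycle hairball, or---if all tails vanish---the bare cycle, of nim-value $0$; either way the bound $\{0,3,4\}$ is safe.
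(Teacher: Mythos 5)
Your proposal is correct and follows essentially the same route as the paper: delete $A$ to get a two-component forest (nim-value $0$ or $3$), observe every other move yields an even-cycle pseudotree or an odd-cycle hairball (nim-values confined to $\{0,3,4\}$), and reach $0$ either via $A$ (odd $V$) or by opening the odd cycle to leave an even-cycle pseudotree on an even number of vertices (even $V$). Your extra care about the simplest-form hypothesis in Theorem~\ref{HAIR} is a legitimate refinement the paper silently skips, and your argument that admissible involutions fix an odd cycle pointwise handles it correctly.
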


\begin{figure}[htbp]
\begin{center}
\includegraphics{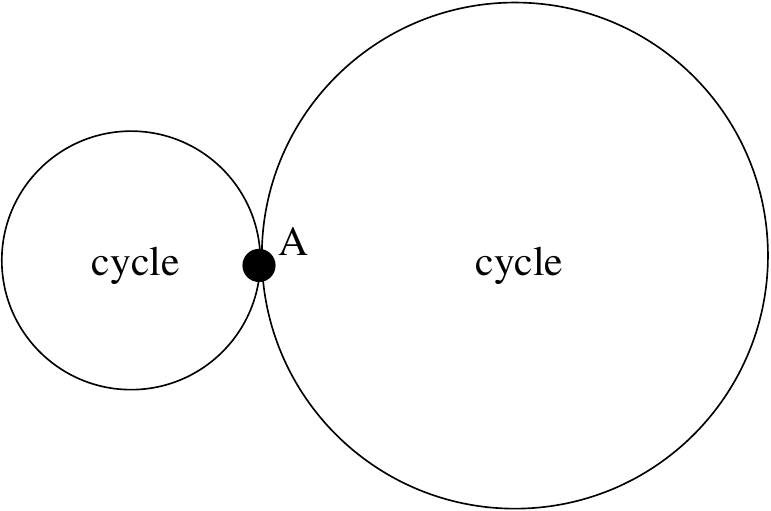}
\end{center}
\caption{Two cycles joined by a vertex.}
\label{cor1}
\end{figure}

\begin{proof}
Deleting $A$ gives a forest with nim-value either $0$ or $3$. Deleting any other vertex or edge results in a hairball, which never has nim-value $1$.  Moving to a position with nim-value $0$ is always possible, either by deleting $A$ if $V$ is odd or by deleting an edge which leaves an even-cycle pseudotree with an even number of vertices if $V$ is even. This proves the corollary.
\end{proof}

\begin{corollary}
Suppose graph $G$ can be constructed by taking two nonadjacent vertices in a cycle graph $C$ and adding another path between them, as in Figure~\ref{cor2}. Let $A$ and $B$ be the two vertices of degree $3$. Then $g(G)=1$ if $v(G)$ is odd and $g(G)=2$ if $v(G)$ is even.
\end{corollary}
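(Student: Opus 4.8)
The plan is to treat $G$ as a \emph{theta graph}: two vertices $A$ and $B$ of degree $3$ joined by three internally disjoint paths $P_1,P_2,P_3$, all of whose other vertices have degree $2$. Since $A$ and $B$ are nonadjacent on the original cycle, at least two of these paths have length (number of edges) at least $2$. Writing the lengths as $\ell_1,\ell_2,\ell_3$, the graph has $v(G)=\ell_1+\ell_2+\ell_3-1$ vertices and $e(G)=\ell_1+\ell_2+\ell_3=v(G)+1$ edges. The key structural observation, which I would establish first, is that \emph{every} single move from $G$ lands in a position whose nim-value is already known. Because $G$ is $2$-connected and $2$-edge-connected, deleting any one edge or vertex leaves a connected graph; concretely, deleting $A$ or $B$ yields a tree (a spider with three legs), while deleting any degree-$2$ vertex or any edge destroys at most one of the three paths and leaves a cycle (formed by the other two paths) carrying at most one pendant path at $A$ and at most one at $B$. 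Thus every move produces a tree, a pure cycle, or a pseudotree whose off-cycle vertices all have degree at most $2$ (a hairball).

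Next I would pin down the nim-values of these three kinds of positions. Trees are covered by Corollary~\ref{narcissus}: a single tree has nim-value $2$ if it has an even number of vertices and $1$ if odd. Pure cycles have nim-value $0$. For the pseudotrees, I claim the nim-value is $0$ when the number of vertices is even and $3$ when it is odd. For an even cycle this is immediate, since such a pseudotree is bipartite with $e=v$ and Table~\ref{amazing} gives $0$/$3$ by parity. For an odd cycle I would invoke the hairball classification, Theorem~\ref{HAIR}: our pseudotree is a hairball with an odd cycle, so its value is $3$ (odd order), $0$ (even order, generic), or $4$ (even order, exceptional). The exceptional nim-value-$4$ family requires a cycle vertex of degree $4$, which cannot occur here because $A$ and $B$ have degree at most $3$ and deleting elements only lowers degrees; hence only $0$ and $3$ arise.

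With all reachable nim-values in hand, the proof reduces to two mex computations, split on the parity of $v(G)$. If $v(G)$ is odd, then deleting $A$ or $B$ gives a tree of even order (value $2$), deleting an internal vertex of a path gives a pseudotree or cycle of even order (value $0$), and deleting an edge gives a pseudotree of odd order (value $3$) or a pure cycle (value $0$); since at least two paths have length $\ge 2$ there is always an internal vertex to delete, so $0$ is reachable while every reachable value lies in $\{0,2,3\}$, giving $g(G)=\mathrm{mex}\{0,2,3\}=1$. If $v(G)$ is even, deleting $A$ or $B$ gives a tree of odd order (value $1$), deleting an edge gives a pseudotree or cycle of even order (value $0$), and deleting an internal vertex gives a pseudotree of odd order (value $3$) or a pure cycle (value $0$); thus $0$ and $1$ are both reachable while no move reaches $2$, giving $g(G)=\mathrm{mex}\{0,1,3\}=2$.

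The step I expect to be the main obstacle is justifying the clean $0$/$3$ dichotomy for the odd-cycle pseudotrees, because Theorem~\ref{HAIR} is stated only for hairballs in simplest form. I would therefore argue that the pseudotrees arising here are already in simplest form: each carries at most one pendant path at $A$ and at most one at $B$, and the two arcs of the odd cycle between $A$ and $B$ have different lengths, so the cycle admits no automorphism interchanging $A$ and $B$ (that would force equal arcs and hence an even cycle), and no reflection through a single vertex is admissible, since such a reflection flips the opposite edge and so fails the hypothesis of Theorem~\ref{sym}. Consequently no nontrivial reducing involution exists and Theorem~\ref{HAIR} applies verbatim. Alternatively, one checks that any reduction via Theorem~\ref{sym} preserves the parity of the vertex count and cannot raise a degree to $4$, so the $0$/$3$ values, and the exclusion of the exceptional case, survive reduction to simplest form. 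Once this dichotomy is secured the two mex computations above finish the proof, and notably no induction on the theta graph itself is required.
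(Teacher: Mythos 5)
Your proof is correct and follows essentially the same route as the paper's: every move from the theta graph lands in a tree, a cycle, or a hairball, whose nim-values are supplied by Corollary~\ref{narcissus}, Table~\ref{amazing}, and Theorem~\ref{HAIR}, and a mex computation finishes the argument. You are somewhat more thorough than the paper in two places it glosses over --- verifying that the resulting odd-cycle hairballs satisfy the ``simplest form'' hypothesis of Theorem~\ref{HAIR} (or that reduction preserves the relevant parity and degree bounds) and ruling out the exceptional nim-value-$4$ case via the absence of a degree-$4$ cycle vertex --- whereas the paper exhibits a $0$-move slightly differently, by noting that $G$ always contains an even cycle and moving to an even-order even-cycle pseudotree.
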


\begin{figure}[htbp]
\begin{center}
\includegraphics{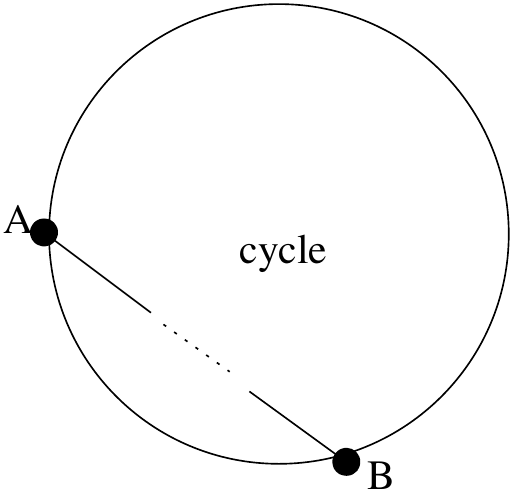}
\end{center}
\caption{A cycle with two nonadjacent vertices joined by any extra path.}
\label{cor2}
\end{figure}

\begin{proof}
Any move other than $A$ or $B$ results in a hairball, which never has nim-value $1$ or $2$. Deleting $A$ or $B$ results in a tree with nim-value $2$ if $v(G)$ is odd and $1$ if $v(G)$ is even. $G$ contains at least one even cycle, since if $C$ is not even the other two cycles are of opposite parity; thus it is always possible to move to an even-cycle pseudotree with an even number of vertices and nim-value $0$. This proves the corollary.
\end{proof}


\small
{\sc Tirasan Khandhawit, Department of Mathematics, M.I.T., Cambridge, MA 02139}

E-mail address: \url{tirasan@math.mit.edu}

{\sc Lynnelle Ye, 531 Lasuen Mall, P.O. Box 16820, Stanford, CA 94309}

E-mail address: \url{lynnelle@stanford.edu}


\begin{thebibliography}{99}

\bibitem{Berlekamp} E. R. Berlekamp, J. H. Conway, and R. K. Guy. \emph{Winning Ways for Your Mathematical Plays}. 2nd ed. A K Peters, Ltd., Wellesley, MA (2001).

\bibitem{Brouwer} A. Brouwer. The game of Chomp. Available at \url{http://www.win.tue.nl/~aeb/games/chomp.html} (2009/09/23).

\bibitem{Byrnes} S. Byrnes. Poset Game Periodicity. Electronic Journal of Combinatorial Number Theory 3 (2003). Available at \url{http://www.integers-ejcnt.org/vol3.html} (2009/09/23).

\bibitem{Christensen} J. D. Christensen and M. Tilford. David Gale's Subset Take-Away Game. American Mathematical Monthly 104 (1997), 762-766.

\bibitem{Draisma} J. Draisma and S. V. Rijnswou. How To Chomp Forests, and Some Other Graphs. Available at \url{http://www.math.unibas.ch/~draisma/recreational/graphchomp.pdf} (2009/09/23).

\bibitem{Gale} D. Gale and A. Neyman. Nim-Type Games. International Journal of Game Theory 11 (1982), 17-20.

\bibitem{Grundy} P. M. Grundy. Mathematics and Games. Eureka 2 (1939), 6-8; reprinted ibid. 27 (1964), 9-11.

\bibitem{Sprague} R. P. Sprague. \"{U}ber mathematische Kampfspiele. T\^{o}hoku Mathematics Journal 41 (1935-36), 438-444.

\end{thebibliography}
\end{document}